\newtheorem{thm}{Theorem}[section]
\newtheorem{lem}[thm]{Lemma}
\newtheorem{pro}[thm]{Proposition}
\newtheorem{rem}[thm]{Remark}
\newtheorem{kor}[thm]{Corollary}
\newtheorem{defi}[thm]{Definition}
\newcommand\PP{\mathbb P}
\newcommand\C{\mathbb C}
\newcommand\Q{\mathbb Q}
\newcommand\Qn{\mathbb Q}
\newcommand\R{\mathbb R}
\newcommand\Z{\mathbb Z}
\newcommand\Zn{\mathbb Z}
\newcommand\N{\mathbb N}
\newcommand\Nn{\mathbb{N}}
\newcommand{\Aut}{\operatorname{Aut}}
\newcommand{\mult}{\operatorname{Mult}}
\title[]{MPS degeneration formula for quiver moduli and refined GW/Kronecker correspondence}
\author{M. Reineke, J. Stoppa and T. Weist}
\date{\today}
\address{}
\email{}
\begin{document}
\begin{abstract} Motivated by string-theoretic arguments Manschot, Pioline and Sen discovered a new remarkable formula for the Poincar\'e polynomial of a smooth compact moduli space of stable quiver representations which effectively reduces to the abelian case (i.e. thin dimension vectors). We first prove a motivic generalization of this formula, valid for arbitrary quivers, dimension vectors and stabilities. In the case of complete bipartite quivers we use the refined GW/Kronecker correspondence between Euler characteristics of quiver moduli and Gromov-Witten invariants to identify the MPS formula for Euler characteristics with a standard degeneration formula in Gromov-Witten theory. Finally we combine the MPS formula with localization techniques, obtaining a new formula for quiver Euler characteristics as a sum over trees, and constructing many examples of explicit correspondences between quiver representations and tropical curves. 
\end{abstract}
\maketitle 
\section{Introduction}

\noindent In \cite{mps}, J. Manschot, B. Pioline and A. Sen derive a remarkable formula for the Poincar\'e polynomial of a smooth compact moduli space of stable quiver representations (called MPS degeneration formula in the following), motivated by string-theoretic techniques (more precisely an interpretation relating quiver moduli to multi-centered black hole solutions to $N=2$ supergravity). In contrast to the previously available formulae \cite{ReHNS} expressing the Poincar\'e polynomial explicitly using (a resolution of) a Harder-Narasimhan type recursion, the MPS degeneration formula expresses it as a summation over Poincar\'e polynomials of moduli spaces of several other quivers, but only involving very special (thin, i.e. type one) dimension vectors (in the language of \cite{mps} the index of certain non-abelian quivers without oriented loops can be reduced to the abelian case, by a physical argument which allows trading Bose-Fermi statistics with its classical limit, Maxwell-Boltzmann statistics). One immediate advantage is that the MPS formula specializes to a similar formula for the Euler characteristics, which is not possible for the Harder-Narasimhan recursion. Surprisingly, the derivation of the MPS formula in \cite[Appendix D]{mps} relies completely on the resolved Harder-Narasimhan recursion of \cite{ReHNS}. One should note however that even in the very special cases when Euler characteristics of quiver moduli were already known, the MPS degeneration formula derives these numbers in a highly nontrivial way.\\[1ex]

As a first result of the present work we prove a motivic generalization of the MPS degeneration formula (Theorem \ref{motivicMPS}) which is meaningful for arbitrary quivers, dimension vectors and stabilities. Essentially, the motivic MPS formula expresses the motive of the quotient stack of the locus of semistable representations by the base change group in terms of similar motives for thin dimension vectors of a covering quiver, as an identity in a suitably localized Grothendieck ring of varieties. The proof essentially proceeds along the lines of \cite[Appendix D]{mps}, but avoids the resolution formula for the Harder-Narasimhan recursion, and clarifies the role of symmetric function identities implicit in \cite{mps}. Specialization of the motivic identity to Poincar\'e polynomials recovers a generalization of the formula of \cite{mps}, which is now shown to hold for arbitrary quivers, arbitrary stabilities and coprime dimension vectors. We also derive a dual MPS degeneration formula (Corollary \ref{dualMPS}), which has the advantage of reducing to a smaller covering quiver, at the expense of having more general dimension vectors.\\[2ex]

In Section \ref{4} we take up a second line of investigation, connected with the so-called GW/Kronecker correspondence based on \cite{gps}, \cite{gp} or more precisely its refinement described in \cite{rw}. A typical result of this type states that the Euler characteristic of certain moduli spaces of representations for suitable quivers (e.g. generalized Kronecker quivers) can be computed alternatively as a Gromov-Witten invariant (on a weighted projective plane). In particular this is the case for coprime dimension vectors of complete bipartite quivers, to which we restrict throughout Section \ref{4}. Writing down the MPS degeneration formula for quiver Euler characteristics in this context one notices a striking similarity with the degeneration formulae which are commonly used in Gromov-Witten theory, expressing a given Gromov-Witten invariant in terms of \emph{relative} invariants, with tangency conditions along divisors. Theorem \ref{mpsdeg} puts this intuition on firm ground: at least for coprime dimension vectors of bipartite quivers, the MPS formula is indeed completely equivalent to a much more standard degeneration formula in Gromov-Witten theory. The proof hinges on the equality of certain Euler characteristics with tropical counts (Proposition \ref{eulgw}).\\

Combining localization techniques with the MPS formula leads to the remarkable conclusion that the Euler characteristic of moduli spaces of stable representations is obtained in a purely combinatorial way. Indeed, since the dimension vectors considered after applying the MPS formula are of type one, the moduli spaces corresponding to torus fixed representations are just isolated points, so that every such moduli space corresponds to a tree with a fixed number of (weighted) points. We describe this method for bipartite quivers in Section \ref{trees} (see especially Corollary \ref{treeSum}), but it could easily be transferred to general quivers without oriented cycles.\\

In Section \ref{loctrop} we analyse the identity of Euler characteristics with tropical counts found in Proposition \ref{eulgw} from this point of view. On the one hand with a pair of weight vectors $({\bf w}(k^1) , {\bf w}(k^2))$ we can associate a tropical curve count $N^{\rm trop}[({\bf w}(k^1) , {\bf w}(k^2))]$, which effectively counts suitable trees. On the other hand with the same weight vector we can associate a quiver Euler characteristic $\chi(M^{\Theta_l-\rm st}_{(k^1,k^2)}(\mathcal{N}))$, which by the above argument (MPS plus localization) is also enumerating certain trees. 

Thus one would expect to be able to find an explicit way of assigning a quiver localization data to one of our tropical curves, and vice versa. The analogy between quiver localization data and tropical curves was already pointed out in \cite{s}, but the MPS formula makes it even stronger. Notice that both tropical curves and localization data naturally carry multiplicities: for curves this is the standard tropical multiplicity (recalled in Section \ref{4}), while in the case of quiver moduli spaces, a fixed tree can be coloured in different ways to obtain a number of torus fixed points. The first natural guess is that the number of colourings and the multiplicity of some corresponding tropical curve coincide. Unfortunately this doesn't work, simply because in general the numbers of underlying curves and trees (forgetting the multiplicity) are different. 

The next more promising attempt is described in Section \ref{loctrop}. On both sides there is a way to construct new combinatorial data recursively. On the one hand we show that our tropical curves of prescribed slope can be obtained by glueing smaller ones in a unique way (at least when the set of prescribed, unbounded incoming edges is chosen generically). This construction also gives a recursive formula for the tropical counts, see Theorem $\ref{troprec}$. On the other hand we have a similar construction for quiver localization data. Starting with a number of semistable tuples, i.e. tuples consisting of a tree and a dimension vector such that the corresponding moduli space of semistables is not empty, we can glue them in a similar way to obtain a localization data with greater dimension vector, see Theorem \ref{glue}.

In many cases these recursive constructions lead to a direct correspondence between tropical curves and quiver localization data. In Section \ref{examples} we describe two such families of examples in detail. We do not know at the moment a method which gives a concrete geometric correspondence in full generality.\\

\noindent\textbf{Acknowledgements.} We are grateful to So Okada for drawing our attention to the formula of Manschot, Pioline and Sen. This research was partially supported by Trinity College, Cambridge. Part of this work was carried out at the Isaac Newton Institute for Mathematical Sciences, Cambridge and at the Hausdorff Center for Mathematics, Bonn. 

\section{Recollections and notation}\label{2}
\subsection{Quivers}Let $Q$ be a quiver with vertices $Q_0$ and arrows $Q_1$ denoted by $\alpha:i\rightarrow j$. We denote by $\Lambda=\Z Q_0$ the free abelian group over $Q_0$ and by $\Lambda^+=\N Q_0$ the set of dimension vectors written as $d=\sum_{i\in Q_0}d_ii$. Define $'\Lambda^+:=\Lambda^+\backslash\{0\}$. There exists a bilinear form on $\Lambda$, called the Euler form, given by
\[\langle d,e\rangle=\sum_{i\in Q_0}d_ie_i-\sum_{\alpha:i\rightarrow j}d_ie_j.\]
We denote its antisymmetrization by $\{d,e\}=\langle d,e\rangle-\langle e,d\rangle$. A representation $X$ of $Q$ of dimension $d\in\Lambda^+$ is given by complex vector spaces $X_i$ of dimension $d_i$ for every $i\in Q_0$ and by linear maps $X_{\alpha}:X_i\rightarrow X_j$ for every arrow $\alpha:i\rightarrow j\in Q_1$.
A vertex $q$ is called a sink (resp. source) if there does not exists an arrow starting at (resp. terminating at) $q$. A quiver is bipartite if $Q_0=I\cup J$ with sources $I$ and sinks $J$. In the following, we denote by $Q(I)\cup Q(J)$ the decomposition into sources and sinks.\\

For a fixed vertex $q\in Q_0$ we denote by $N_q$ the set of neighbours of $q$ and for $V\subset Q_0$ define $N_V:=\cup_{q\in V}N_q$.

For a representation $X$ of the quiver $Q$ we denote by $\underline{\dim} X\in\Lambda^+$ its dimension vector. Moreover we choose a level $l:Q_0\rightarrow\Nn^+$ on the set of vertices. Define two linear forms $\Theta, \kappa\in\mathrm{Hom}(\Zn Q_0,\Zn)$ by $\Theta(d)=\sum_{q\in Q_0}\Theta_qd_q$, $\kappa(d)=\sum_{q\in Q_0}l(q)d_q$ and a slope function $\mu:\Nn Q_0\rightarrow\Qn$ by \[\mu(d)=\frac{\Theta(d)}{\kappa(d)}.\]

For $\mu\in\Qn$ we denote by $'\Lambda^+_{\mu}\subset~ '\Lambda^+$ be the set of dimension vectors of slope $\mu$ and define $\Lambda_{\mu}^+=~'\Lambda^+_{\mu}\cup\{0\}$. This is a subsemigroup of $\Lambda^+$.

For a representation $X$ of the quiver $Q$ we define $\mu(X):=\mu(\underline{\dim}X)$. The representation $X$ is called (semi-)stable if the slope (weakly) decreases on proper non-zero subrepresentations. Fixing a slope function as above, we denote by $R^{\Theta-\rm{sst}}_d(Q)$ the set of semistable points and by $R^{\Theta-\mathrm{st}}_d(Q)$ the set of stable points in the affine variety $R_d(Q):=\oplus_{\alpha:i\rightarrow j}\mathrm{Hom}(\C^{d_i},\C^{d_j})$ of representations of dimension $d\in\Nn Q_0$.
There exist moduli spaces $M^{\Theta-\rm{st}}_d(Q)$ (resp. $M^{\Theta-\rm{sst}}_d(Q)$) of stable (resp. semistable) representations parametrizing isomorphism classes of stable (resp. polystable) representations (\cite{king}). If $Q$ is acyclic and $M^{\Theta-\rm{st}}_d(Q)$ is non-empty, it is a smooth irreducible variety of dimension $1-\langle d,d\rangle$. Moreover it is projective if semistability and stability coincide.\\

Fixing a quiver $Q$ and a dimension vector $d\in\mathbb{N}Q_0$ such that there exists a (semi-)stable representation for this tuple we call this tuple (semi-)stable.
\subsection{The tropical vertex}
We briefly review the definition of one of the tools we shall use, the tropical vertex group, following \cite[Section 0]{gps}.\\[1ex]
We fix nonnegative integers $l_1,l_2\geq 1$ and define $R$ as the formal power series ring $R=\Qn [[s_1,\ldots,s_{l_1},t_1,\ldots,t_{l_2}]]$, with maximal ideal $\mathfrak{m}$. Let $B$ be the $R$-algebra $$B=\Qn[x^{\pm 1},y^{\pm 1}][[s_1,\ldots,s_{l_1},t_1,\ldots,t_{l_2}]]=\Qn[x^{\pm 1},y^{\pm 1}]\widehat{\otimes}R$$
(a suitable completion of the tensor product).  For $(a,b)\in\Zn^2$ and a series $$f\in 1+x^ay^b\Qn[x^ay^b]\widehat{\otimes}\mathfrak{m}$$ we consider the $R$-linear automorphism of $B$ defined by
$$\theta_{(a,b),f}:\left\{\begin{array}{ccc}x&\mapsto&xf^{-b}\\ y&\mapsto&yf^a.\end{array}\right.$$
Notice that these automorphisms respect the symplectic form $\frac{dx}{x}\wedge\frac{dy}{y}$.
\begin{defi} The tropical vertex group $\mathbb{V}_R \subset{\rm Aut}_R(B)$ is defined as the completion with respect to $\mathfrak{m}$ of the subgroup of ${\rm Aut}_R(B)$ generated by all elements $\theta_{(a,b),f}$ as above.
\end{defi}

We recall that by \cite{KSAffine} (see also \cite[Theorem 1.3]{gps}) there exists a unique infinite ordered product factorization in $\mathbb{V}_R$ of the form 
\begin{equation*} \theta_{(1,0),\prod_k(1+s_kx)}\theta_{(0,1),\prod_l(1+t_ly)}=\prod_{b/a\mbox{ \footnotesize decreasing}}\theta_{(a,b),f_{(a,b)}},\end{equation*}
the product ranging over all coprime pairs $(a,b)\in\Nn^2$.
\section{Motivic MPS formula}

\subsection{Some symmetric function identities}
We start with some preliminaries on symmetric functions following \cite{McD}. Partitions $\lambda$ of $n$ are written $\lambda\vdash n$. With a partition $\lambda\vdash n$ we associate a multiplicity vector $m_*(\lambda)$, where $m_i(\lambda)$ is the multiplicity of the part $i$ in $\lambda$. Conversely with a vector $m_*=(m_l\in{\mathbb N})_{l\geq 1}$ we associate the partition $\lambda(m_*)=(1^{m_1}2^{m_2}\ldots)$. This induces a bijection between partitions of $n$ and the set of multiplicity vectors $m_*$ such that $\sum_llm_l=n$. In this case, we also write $m_*\vdash n$.\\[1ex]
Denote by $\mathcal{P}_{\mathbb Q}$ the ring of symmetric functions with rational coefficients in variables $x_i$ for $i\geq 1$.
We consider the so-called principal specialization map $p:\mathcal{P}_{\mathbb Q}\rightarrow {\mathbb Q}(q)$ given by $x_i\mapsto q^{i-1}$ for all $i\geq 1$.\\[1ex]
Denote by $e_n=\sum_{i_1<\ldots<i_n}x_{i_1}\ldots x_{i_n}$ the $n$-th elementary symmetric function and by $p_n=\sum_ix_i^n$ the $n$-th power sum function. We consider the following generating functions in $\mathcal{P}_{\mathbb Q}[[t]]$:
$$E(t)=\sum_{n\geq 0}e_nt^n,\;\;\; P(t)=\sum_{n\geq 1}p_nt^{n-1}.$$
Then $P(-t)=(\frac{d}{dt}E(t))/E(t)$ by \cite[I,(2.10')]{McD}. Defining $e_\lambda=e_{\lambda_1}e_{\lambda_2}\ldots$, and $p_\lambda$ similarly, for an arbitrary partition $\lambda$, both sets $(e_\lambda)_\lambda$, resp. $(p_\lambda)_\lambda$, are bases of $\mathcal{P}_{\mathbb Q}$ (\cite[I,(2.4), (2.12)]{McD}). We consider the base change between these bases.\\[1ex]
We have
$$e_n=\sum_{\lambda\vdash n}\varepsilon_\lambda z_\lambda^{-1}p_\lambda,$$
where $\varepsilon_\lambda=(-1)^{|\lambda|-l(\lambda)}$ and $z_\lambda=\prod_lm_l(\lambda)!l^{m_l(\lambda)}$ (\cite[I,(2.14')]{McD}).
\newpage
\begin{lem}\label{l1}$~$
\begin{enumerate}
\item The previous identity can be rewritten as
$$e_n=\sum_{m_*\vdash n}\prod_l\frac{1}{m_l!}\left(\frac{(-1)^{l-1}}{l}\right)^{m_l}p_{\lambda(m_*)}.$$
\item Conversely, we have:
$$p_n=(-1)^{n-1}n\sum_{\lambda\vdash n}\frac{(-1)^{l(\lambda)-1}}{l(\lambda)}\frac{l(\lambda)!}{\prod_lm_l(\lambda)!}e_\lambda.$$
\end{enumerate}
\end{lem}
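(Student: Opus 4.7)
\medskip

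The plan is to treat the two parts separately, with part (1) being a mere rewriting of the cited identity, and part (2) following from the generating function relation $P(-t)=E'(t)/E(t)$ by taking logarithms.

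For part (1), I start from the identity $e_n=\sum_{\lambda\vdash n}\varepsilon_\lambda z_\lambda^{-1}p_\lambda$ recalled just before the statement, and translate from partitions $\lambda$ to multiplicity vectors $m_*=m_*(\lambda)$ via the bijection $\lambda\leftrightarrow m_*$. The key numerical observations are that, writing $\lambda=\lambda(m_*)$, one has $|\lambda|=\sum_l lm_l=n$ and $l(\lambda)=\sum_l m_l$, so
\[
|\lambda|-l(\lambda)=\sum_l (l-1)m_l,\qquad \varepsilon_\lambda=\prod_l \bigl((-1)^{l-1}\bigr)^{m_l},
\]
while $z_\lambda=\prod_l m_l!\,l^{m_l}$ by definition. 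Plugging these into $\varepsilon_\lambda z_\lambda^{-1}$ gives precisely $\prod_l\frac{1}{m_l!}\bigl(\frac{(-1)^{l-1}}{l}\bigr)^{m_l}$, proving (1).

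For part (2), I use $P(-t)=E'(t)/E(t)$, which integrates (starting from $E(0)=1$) to
\[
\log E(t)=\sum_{n\geq 1}\frac{(-1)^{n-1}}{n}p_n\,t^n.
\]
On the other hand, writing $E(t)=1+\sum_{n\geq 1}e_n t^n$ and expanding $\log(1+u)=\sum_{k\geq 1}\frac{(-1)^{k-1}}{k}u^k$ gives
\[
\log E(t)=\sum_{k\geq 1}\frac{(-1)^{k-1}}{k}\sum_{\substack{n_1,\ldots,n_k\geq 1}} e_{n_1}\cdots e_{n_k}\,t^{n_1+\cdots+n_k}.
\]
To extract the coefficient of $t^n$, I group ordered tuples $(n_1,\ldots,n_k)$ with $\sum n_i=n$ by their underlying partition $\lambda\vdash n$: the number of orderings yielding $\lambda$ is the multinomial coefficient $\frac{l(\lambda)!}{\prod_l m_l(\lambda)!}$, and the product $e_{n_1}\cdots e_{n_k}$ equals $e_\lambda$. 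Equating coefficients of $t^n$ in the two expressions for $\log E(t)$ yields
\[
\frac{(-1)^{n-1}}{n}p_n=\sum_{\lambda\vdash n}\frac{(-1)^{l(\lambda)-1}}{l(\lambda)}\frac{l(\lambda)!}{\prod_l m_l(\lambda)!}e_\lambda,
\]
from which (2) follows by multiplying through by $(-1)^{n-1}n$.

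There is no real obstacle here; both parts are elementary manipulations. The only bookkeeping that needs care is tracking the signs $(-1)^{l-1}$ versus $(-1)^{l(\lambda)-1}$ and correctly identifying the multinomial factor that appears when converting the ordered expansion of $\log E(t)$ into a sum indexed by partitions.
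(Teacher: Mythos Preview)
Your argument is correct and matches the paper's own proof essentially line for line: part (1) is just unwinding the definitions of $\varepsilon_\lambda$ and $z_\lambda$, and part (2) is the same logarithmic expansion of $E(t)$ combined with $P(-t)=\frac{d}{dt}\log E(t)$ (the paper differentiates at the end where you integrate at the start, which is the same step read in the other direction).
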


\begin{proof} The first identity follows from the definitions. For the second we use $\log(1+x)=\sum_{k\geq 1}\frac{(-1)^{k-1}}{k}x^k$ to calculate
\begin{eqnarray*}\log E(t)&=&\log(1+\sum_{n\geq 1}e_nt^n)=\sum_{k\geq 1}\frac{(-1)^{k-1}}{k}(\sum_{n\geq 1}e_nt^n)^k=\\
&=&\sum_{k\geq 1}\frac{(-1)^{k-1}}{k}\sum_{n_1,\ldots,n_k\geq 1}e_{n_1}\ldots e_{n_k}t^{n_1+\ldots+n_k}=\\
&=&\sum_{k\geq 1}\frac{(-1)^{k-1}}{k}\sum_{\lambda:l(\lambda)=k}\frac{k!}{\prod_{l}m_l(\lambda)!}e_\lambda t^{|\lambda|}=\\
&=&\sum_{n\geq 1}\sum_{\lambda\vdash n}\frac{(-1)^{l(\lambda)-1}}{l(\lambda)}\frac{l(\lambda)!}{\prod_lm_l(\lambda)!}e_\lambda t^n,
\end{eqnarray*}
where we have used the fact that the number of rearrangements of a partition $\lambda$ is $\frac{l(\lambda)!}{\prod_lm_l(\lambda)!}$.
Differentiating and using $P(-t)=\frac{d}{dt}(\log E(t))$, the lemma follows.\end{proof}

For partitions $\lambda$, $\mu$ of $n$, denote by $L_{\mu\lambda}$ the number of functions $f:\{1,\ldots,l(\mu)\}\rightarrow{\mathbb N}$ such that $\lambda_i=\sum_{j:f(j)=i}\mu_j$. Then $e_\lambda=\sum_\mu\varepsilon_\mu z_\mu^{-1}L_{\mu\lambda}p_\mu$ by \cite[I,(6.11)]{McD}.

\begin{lem}\label{l2} We have
$$e_\lambda=\sum_{m_*\vdash n}(\sum_{m^*_*}\prod_{l\geq 1}\frac{m_l!}{\prod_j m^j_l!})\prod_{l\geq 1}\frac{1}{m_l!}\left(\frac{(-1)^{l-1}}{l}\right)^{m_l}p_{\lambda(m_*)},$$
where the inner sum ranges over all tuples $(m^j_l)_{j,l\geq 1}$ such that $\sum_jm^j_l=m_l$ for all $l\geq 1$ and $\sum_llm^j_l=\lambda_j$ for all $j\geq 1$.
\end{lem}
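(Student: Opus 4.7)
The plan is to derive this by simply expanding $e_\lambda=\prod_j e_{\lambda_j}$ using Lemma \ref{l1}(1) factor by factor, and then regrouping terms according to the total multiplicity vector. I would not use the identity $e_\lambda=\sum_\mu\varepsilon_\mu z_\mu^{-1}L_{\mu\lambda}p_\mu$ cited from Macdonald, since the combinatorial content of the lemma is essentially the same bookkeeping; a direct expansion is cleaner.

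First I would apply Lemma \ref{l1}(1) to each factor:
\[
e_{\lambda_j}=\sum_{m^j_*\vdash \lambda_j}\prod_{l\geq 1}\frac{1}{m^j_l!}\left(\frac{(-1)^{l-1}}{l}\right)^{m^j_l}p_{\lambda(m^j_*)},
\]
and then multiply the factors together. This yields
\[
e_\lambda=\sum_{(m^j_*)_j}\left(\prod_{j,l}\frac{1}{m^j_l!}\left(\frac{(-1)^{l-1}}{l}\right)^{m^j_l}\right)\prod_j p_{\lambda(m^j_*)},
\]
where the sum is over all sequences $(m^j_*)_j$ with $m^j_*\vdash \lambda_j$ for each $j$.

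Next I would introduce $m_l:=\sum_j m^j_l$ and observe two things. First, since $p_{\lambda(m^j_*)}=\prod_l p_l^{m^j_l}$, we have $\prod_j p_{\lambda(m^j_*)}=\prod_l p_l^{m_l}=p_{\lambda(m_*)}$, so the resulting power sum only depends on $m_*$. Second, $\sum_l lm_l=\sum_j\sum_l lm^j_l=\sum_j\lambda_j=n$, so $m_*\vdash n$. For the coefficients, the key algebraic manipulation is
\[
\prod_{j,l}\frac{1}{m^j_l!}\left(\frac{(-1)^{l-1}}{l}\right)^{m^j_l}=\prod_l\frac{1}{\prod_j m^j_l!}\left(\frac{(-1)^{l-1}}{l}\right)^{m_l}=\left(\prod_l\frac{m_l!}{\prod_j m^j_l!}\right)\prod_l\frac{1}{m_l!}\left(\frac{(-1)^{l-1}}{l}\right)^{m_l}.
\]

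Finally I would regroup the sum: fix $m_*\vdash n$, and sum over all $(m^j_l)_{j,l}$ satisfying $\sum_j m^j_l=m_l$ (from the definition of $m_*$) and $\sum_l lm^j_l=\lambda_j$ (from $m^j_*\vdash\lambda_j$). This produces exactly the claimed identity. There is no real obstacle here; the only thing to watch is correctly tracking the constraints on the inner summation indices and noting that the multinomial-type factor $\prod_l m_l!/\prod_j m^j_l!$ arises solely from the normalizing factor $\prod_l m_l!$ introduced to match the outer factor $\prod_l(1/m_l!)((-1)^{l-1}/l)^{m_l}$ of Lemma \ref{l1}(1).
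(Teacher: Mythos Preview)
Your proof is correct. Both approaches ultimately rest on Lemma~\ref{l1}(1), but the paper and you organize the bookkeeping differently. The paper starts from the Macdonald identity $e_\lambda=\sum_\mu\varepsilon_\mu z_\mu^{-1}L_{\mu\lambda}p_\mu$ and then evaluates the structure constant $L_{\mu\lambda}$ by setting up a bijection between the functions $f$ counted by $L_{\mu\lambda}$ and pairs $(m^*_*,I^*_*)$, where the $I^j_l$ form an ordered set partition of $\{1,\dots,m_l\}$ with block sizes $m^j_l$; the multinomial factor $\prod_l m_l!/\prod_j m^j_l!$ then arises as the number of such $I^*_*$ for fixed $m^*_*$. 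You instead bypass $L_{\mu\lambda}$ entirely by expanding $e_\lambda=\prod_j e_{\lambda_j}$ factor by factor and regrouping; here the same multinomial factor appears purely algebraically, from inserting and removing $\prod_l m_l!$ when matching to the form of Lemma~\ref{l1}(1). Your route is more self-contained and slightly shorter; the paper's route has the mild conceptual advantage of identifying the inner sum explicitly as the transition coefficient $L_{\mu\lambda}$, which explains its meaning as a count of set-valued functions.
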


\begin{proof} Assume $\mu=(1^{m_1}2^{m_2}\ldots)$. To a function $f$ as above, we associate the sets
$$I^j_l=\{i\in\{1,\ldots,m_l\}\, :\, f(m_1+\ldots+m_{l-1}+i)=j\}$$
for $j,l\geq 1$. Then, for all $l\geq 1$, the set $\{1,\ldots,m_l\}$ is the disjoint union of the $I^j_l$. Defining $m^j_l$ as the cardinality of $I^j_l$, we thus have $\sum_jm^j_l=m_l$ for all $l\geq 1$, and $\sum_llm^j_l=\lambda j$ by definition of $f$. This establishes a bijection between the set of functions $f$ which is counted by $L_{\mu\lambda}$ and the set of pairs $(m_*^*,I^*_*)$ which is counted by the inner sum. \end{proof}

Under the principal specialization map, $e_n$ maps to $\frac{q^{\binom{n}{2}}}{(1-q)(1-q^2)\ldots(1-q^n)}$, and $p_n$ maps to $\frac{1}{(1-q^n)}$ (\cite[I,2. Example 4.]{McD}). Using the first identity of Lemma \ref{l1}, this yields the identity
$$\frac{q^{\binom{n}{2}}}{(1-q)\ldots(1-q^n)}=\sum_{m_*\vdash n}\prod_{l\geq 1}\frac{1}{m_l!}\left(\frac{(-1)^{l-1}}{l(1-q^l)}\right)^{m_l}.$$
Replacing $q$ by $q^{-1}$ and multiplying by an appropriate power of $q$, this is equivalent to
\begin{lem}\label{l3} We have
$$\frac{q^{\binom{n}{2}}}{(q^n-1)\ldots(q^n-q^{n-1})}=\sum_{m_*\vdash n}\prod_{l\geq 1}\frac{1}{m_l!}\left(\frac{(-1)^{l-1}}{l[l]_q}\right)^{m_l}\frac{1}{(q-1)^{\sum_lm_l}},$$
where $[l]_q=(q^l-1)/(q-1)$.
\end{lem}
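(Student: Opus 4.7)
The plan is to derive Lemma~\ref{l3} as a direct consequence of the identity
$$\frac{q^{\binom{n}{2}}}{(1-q)(1-q^2)\cdots(1-q^n)}=\sum_{m_*\vdash n}\prod_{l\geq 1}\frac{1}{m_l!}\left(\frac{(-1)^{l-1}}{l(1-q^l)}\right)^{m_l}$$
displayed just before the statement, by substituting $q\mapsto q^{-1}$ and then multiplying through by an overall power of $q$. The task is then purely algebraic: first identify the normalising $q$-power by analysing the left hand side, and then verify that the right hand side transforms into the claimed form.

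First I would treat the left hand side. Using $1-q^{-l}=q^{-l}(q^l-1)$, so that $\prod_{l=1}^n(1-q^{-l})=q^{-\binom{n+1}{2}}\prod_l(q^l-1)$, the substitution produces
$$\left.\frac{q^{\binom{n}{2}}}{\prod_l(1-q^l)}\right|_{q\mapsto q^{-1}}=\frac{q^{-\binom{n}{2}}}{q^{-\binom{n+1}{2}}\prod_l(q^l-1)}=\frac{q^n}{\prod_l(q^l-1)}.$$
The desired left hand side of Lemma~\ref{l3}, on the other hand, can be rewritten using the factorisation $\prod_{k=0}^{n-1}(q^n-q^k)=q^{\binom{n}{2}}\prod_l(q^l-1)$ as $1/\prod_l(q^l-1)$. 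This pins down $q^{-n}$ as the appropriate global multiplicative factor.

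Next I would perform the same substitution on the right hand side. Each basic factor becomes
$$\frac{1}{1-q^{-l}}=\frac{q^l}{q^l-1}=\frac{q^l}{(q-1)[l]_q},$$
so the summand indexed by $m_*$ transforms into
$$\prod_l\frac{1}{m_l!}\left(\frac{(-1)^{l-1}q^l}{l(q-1)[l]_q}\right)^{m_l}.$$
The powers of $q$ collect as $q^{\sum_l lm_l}=q^n$, which cancels precisely against the overall $q^{-n}$; the powers of $(q-1)$ combine into $(q-1)^{-\sum_l m_l}$; and the signs accumulate to $(-1)^{\sum_l(l-1)m_l}$, which by the constraint $\sum_l lm_l=n$ coincides with the sign factor $\prod_l(-1)^{(l-1)m_l}$ appearing in the asserted identity. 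The only real obstacle is mechanical bookkeeping of the signs and $q$-exponents through the substitution; once this is done carefully, the two sides of Lemma~\ref{l3} match exactly.
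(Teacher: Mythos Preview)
Your proposal is correct and follows exactly the route the paper indicates: apply the principal specialization identity, substitute $q\mapsto q^{-1}$, and clear a global power of $q$; you have simply filled in the bookkeeping that the paper leaves to the reader. One minor remark: your closing comment on the signs is superfluous, since the factors $(-1)^{l-1}$ do not involve $q$ and pass through the substitution unchanged, so no appeal to $\sum_l l m_l=n$ is needed there.
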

 
\subsection{The motivic MPS formula}


We assume throughout that an arbitrary finite quiver $Q$ and a stability $\Theta$ for $Q$ are given and we fix a vertex $i\in Q_0$.\\[1ex]
We introduce a new (levelled) quiver $\widehat{Q}$ by replacing the vertex $i$ by vertices $i_{k,l}$ for $k,l\geq 1$, thus $\widehat{Q}_0=Q_0\setminus\{i\}\cup\{i_{k,l}\, :\, k,l\geq 1\}$, with vertex $i_{k,l}$ being of level $l$. The arrows in $\widehat{Q}$ are given by the following rules:
\begin{itemize}
\item all arrows $\alpha:j\rightarrow k$ in $Q$ which are not incident with $i$ induce an arrow $\alpha:j\rightarrow k$ in $\widehat{Q}$,
\item all arrows $\alpha:i\rightarrow j$ (resp.~$\alpha:j\rightarrow i$) in $Q$ for $j\not=i$ induce arrows $\alpha_p:i_{k,l}\rightarrow j$ (resp. $\alpha_p:j\rightarrow i_{k,l}$) for $k,l\geq 1$ and $p=1,\ldots,l$ in $\widehat{Q}$,
\item all loops $\alpha:i\rightarrow i$ in $Q$ induce arrows $\alpha_{p,q}:i_{k,l}\rightarrow i_{k',l'}$ for $k,l,k',l'\geq 1$ and $p=1,\ldots,l$, $q=1,\ldots,l'$ in $\widehat{Q}$.
\end{itemize}

Given a dimension vector $d$ for $Q$ and a multiplicity vector $m_*\vdash d_i$ (that is, $\sum_llm_l=d_i$) as above, we define a dimension vector $\widehat{d}(m_*)$ for $\widehat{Q}$ by $\widehat{d}(m_*)_j=d_j$ for $j\not=i$ in $Q_0$ and $$\widehat{d}(m_*)_{i_{k,l}}=\left\{\begin{array}{rrr}1&,&k\leq m_l\\ 0&,&k>m_l.\end{array}\right.$$

We have $$G_{\widehat{d}(m_*)}\simeq\prod_{j\not=i}{\rm GL}_{d_j}({\mathbb C})\times{\bf G}_m^{\sum_lm_l},$$
where ${\bf G}_m={\mathbb C}^*$ denotes the multiplicative group of the field ${\mathbb C}$.
We choose an arbitrary basis of ${\mathbb C}^{d_i}$ indexed by vectors $v_{k,l,p}$ for $l\geq 1$, $1\leq k\leq m_l$ and $1\leq p\leq l$ (this is possible since $\sum_llm_l=d_i$). Then the group $G_{\widehat{d}(m_*)}$ embeds into $G_d$ by letting the $(k,l)$-th component of ${\bf G}_m$ scale the vectors $v_{k,l,p}$ for $p=1,\ldots,l$ simultaneously, for all $l\geq 1$, $1\leq k\leq m_l$.\\[1ex] 
We define a stability $\widehat{\Theta}$ for $\widehat{Q}$ by $\widehat{\Theta}_j=\Theta_j$ for all $j\not=i$ in $Q_0$ and $\widehat{\Theta}_{i_{k,l}}=l\Theta_i$ for all $k,l\geq 1$. The associated slope function is denoted by $\widehat{\mu}$. The following lemma is easily verified by working through the definitions of $\widehat{Q}$, $\widehat{d}$ and $\widehat{\mu}$:

\begin{lem}\label{l34} Via the above embedding, we have a $G_{\widehat{d}(m_*)}$-equivariant isomorphism between $R_d(Q)$ and $R_{\widehat{d}(m_*)}(\widehat{Q})$. Furthermore, we have $\widehat{\mu}(\widehat{d}(m_*))=\mu(d)$.
\end{lem}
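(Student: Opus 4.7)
The plan is a straightforward unwinding of the definitions: decompose $R_d(Q)$ according to the chosen basis of $\mathbb{C}^{d_i}$, match each summand with exactly one arrow of $\widehat{Q}$ via the definition of $\widehat{d}(m_*)$, check that the embedded $G_{\widehat{d}(m_*)}$-action corresponds on both sides, and finally compute the slope directly.

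First I decompose
\[ R_d(Q)=\bigoplus_{\alpha\in Q_1}\mathrm{Hom}(\mathbb{C}^{d_{s(\alpha)}},\mathbb{C}^{d_{t(\alpha)}}) \]
into three types of contributions: arrows $\alpha:j\to k$ not incident with $i$, arrows with exactly one endpoint at $i$, and loops at $i$. For the first type, neither the Hom-space nor the relevant block of $G_{\widehat{d}(m_*)}$ changes in $\widehat{Q}$. For an arrow $\alpha:i\to j$ of $Q$, I use the basis $\{v_{k,l,p}\}$ of $\mathbb{C}^{d_i}$ to split
\[ \mathrm{Hom}(\mathbb{C}^{d_i},\mathbb{C}^{d_j})=\bigoplus_{l\geq 1}\bigoplus_{k=1}^{m_l}\bigoplus_{p=1}^{l}\mathrm{Hom}(\mathbb{C} v_{k,l,p},\mathbb{C}^{d_j}). \]
Each one-dimensional-source summand is naturally identified with the summand attached to exactly one arrow $\alpha_p:i_{k,l}\to j$ of $\widehat{Q}$, since $\widehat{d}(m_*)_{i_{k,l}}=1$ precisely when $k\le m_l$, and since $p$ runs from $1$ to $l$ in both bookkeepings. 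Arrows $j\to i$ are handled symmetrically, and for a loop $\alpha:i\to i$ I apply the basis splitting on both source and target and obtain exactly one summand for each arrow $\alpha_{p,q}:i_{k,l}\to i_{k',l'}$ of $\widehat{Q}$.

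For equivariance, I observe that under the embedding $G_{\widehat{d}(m_*)}\hookrightarrow G_d$ described before the lemma, the $(k,l)$-th copy of $\mathbb{G}_m$ scales $v_{k,l,1},\ldots,v_{k,l,l}$ simultaneously and acts trivially on all other basis vectors. Hence on each Hom summand in the decomposition above it acts by scalar multiplication (or its inverse, for incoming arrows), which is exactly the natural action of $\mathrm{GL}_{\widehat{d}(m_*)_{i_{k,l}}}=\mathbb{G}_m$ at the vertex $i_{k,l}$ on the corresponding summand of $R_{\widehat{d}(m_*)}(\widehat{Q})$. The remaining $\mathrm{GL}_{d_j}$-factors act identically on both sides, so the constructed identification is $G_{\widehat{d}(m_*)}$-equivariant.

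For the slope equality I just compute, using $\sum_{l\geq 1}lm_l=d_i$:
\[ \widehat{\Theta}(\widehat{d}(m_*))=\sum_{j\neq i}\Theta_j d_j+\sum_{l\geq 1}m_l\cdot l\Theta_i=\Theta(d), \]
and analogously $\kappa(\widehat{d}(m_*))=\sum_{j\neq i}l(j)d_j+\sum_{l\geq 1}l\,m_l=\kappa(d)$ (the level $l$ assigned to $i_{k,l}$ is precisely what makes the second sum collapse to $d_i$, matching the contribution of $i$ in $\kappa(d)$). Dividing gives $\widehat{\mu}(\widehat{d}(m_*))=\mu(d)$. There is no genuine obstacle in the argument; the only place requiring care is the triple-index bookkeeping $(k,l,p)$ with $k\leq m_l$ and $p\leq l$ in the basis decomposition for arrows incident with $i$, which is designed exactly to reproduce the arrow multiplicities of $\widehat{Q}$ weighted by the nonzero entries of $\widehat{d}(m_*)$.
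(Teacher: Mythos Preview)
Your proof is correct and follows exactly the approach the paper intends: the paper leaves the proof empty, stating only that the lemma ``is easily verified by working through the definitions of $\widehat{Q}$, $\widehat{d}$ and $\widehat{\mu}$,'' which is precisely what you have carried out in detail.
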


\begin{proof} \end{proof}

Our motivic version of the MPS formula is an identity in a suitably localized Grothendieck ring of varieties; we refer to \cite{Bridge} for an introduction to this topic suitable for our purposes. Let $K_0({\rm Var}/{\mathbb C})$ be the free abelian group generated by representatives $[X]$ of all isomorphism classes of complex varieties $X$, modulo the relation $[X]=[A]+[U]$ if $A$ is isomorphic to a closed subvariety of $X$, with complement isomorphic to $U$. Multiplication is given by $[X]\cdot[Y]=[X\times Y]$. Denote by $[{\bf L}]$ the class of the affine line. We work in the localization
$$\mathcal{K}=(K_0({\rm Var}/{\mathbb C})\otimes{\mathbb Q})[[{\bf L}]^{-1},([{\bf L}]^n-1)^{-1}\, :\, n\geq 1].$$

\begin{thm}\label{motivicMPS} For arbitrary $Q$, $d$, $\Theta$ and $i$ as above, the following identity holds in $\mathcal{K}$:
$$[{\bf L}]^{\binom{d_i}{2}}\frac{[R_d^{\rm sst}(Q)]}{[G_d]}=\sum_{m_*\vdash d_i}\prod_{l\geq 1}\frac{1}{m_l!}\left(\frac{(-1)^{l-1}}{l[{\mathbb P}^{l-1}]}\right)^{m_l}\frac{[R_{\widehat{d}(m_*)}^{\rm sst}(\widehat{Q})]}{[G_{\widehat{d}(m_*)}]}.$$
\end{thm}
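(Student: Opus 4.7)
My plan proceeds in two steps: first a ``total'' (non-stability) version of the identity, then a reduction of the semistable identity to the total one via Harder--Narasimhan (HN) induction.

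\emph{Step 1.} I would first establish the analogue of the stated identity with $R^{\rm sst}$ replaced everywhere by $R$. Lemma \ref{l34} supplies $[R_d(Q)]=[R_{\widehat d(m_*)}(\widehat Q)]$ in $\mathcal K$ for every $m_*\vdash d_i$, while the explicit form of $G_{\widehat d(m_*)}$ gives $[G_{\widehat d(m_*)}]=\prod_{j\neq i}[\mathrm{GL}_{d_j}]\cdot ([{\bf L}]-1)^{\sum_l m_l}$. Cancelling the common factor $[R_d(Q)]/\prod_{j\neq i}[\mathrm{GL}_{d_j}]$, the total identity reduces to
$$\frac{[{\bf L}]^{\binom{d_i}{2}}}{[\mathrm{GL}_{d_i}]}=\sum_{m_*\vdash d_i}\prod_l\frac{1}{m_l!}\left(\frac{(-1)^{l-1}}{l[{\mathbb P}^{l-1}]}\right)^{m_l}\frac{1}{([{\bf L}]-1)^{\sum_l m_l}},$$
which is exactly Lemma \ref{l3} at $q=[{\bf L}]$, using $[{\mathbb P}^{l-1}]=([{\bf L}]^l-1)/([{\bf L}]-1)$ and $[\mathrm{GL}_n]=[{\bf L}]^{\binom{n}{2}}\prod_{k=1}^n([{\bf L}]^k-1)$.

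\emph{Step 2.} I would then induct on $|d|$, using the motivic HN recursion
$$\frac{[R_d(Q)]}{[G_d]}=\sum_{\substack{d=d^1+\cdots+d^s\\ \mu(d^1)>\cdots>\mu(d^s)}}[{\bf L}]^{-\sum_{a<b}\langle d^a,d^b\rangle_Q}\prod_k\frac{[R_{d^k}^{\rm sst}(Q)]}{[G_{d^k}]}$$
and its analogue for $\widehat Q$, substituted into both sides of the Step 1 identity. The trivial ($s=1$) term isolates the semistable claim, so it remains to match the nontrivial HN contributions. Every $\widehat Q$-HN type of some $\widehat d(m_*)$ coarsens uniquely to a $Q$-HN type $(d^k)$ of $d$; the refinement at vertex $i$ is a tuple $(m^k_l)_{k,l}$ with $\sum_k m^k_l=m_l$ and $\sum_l l\,m^k_l=d^k_i$, and for fixed $(m^k_l)$ there are exactly $\prod_l m_l!/\prod_k m^k_l!$ such $\widehat Q$-HN refinements, all with the same motivic contribution. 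Combining this multinomial with the coefficient $\prod_l(m_l!)^{-1}((-1)^{l-1}/l[{\mathbb P}^{l-1}])^{m_l}$ and summing over $m_*$ is exactly the inner sum of Lemma \ref{l2}, so the resulting coefficient factorises as $\prod_k\prod_l(m^k_l!)^{-1}((-1)^{l-1}/l[{\mathbb P}^{l-1}])^{m^k_l}$, i.e.\ as the product over slices of the theorem's coefficient applied to $\widehat{d^k}(m^k_*)$. The inductive hypothesis applied to each proper slice $d^k$ then collapses $\sum_{m^k_*}c_{m^k_*}[R_{\widehat{d^k}(m^k_*)}^{\rm sst}(\widehat Q)]/[G_{\widehat{d^k}(m^k_*)}]$ to $[{\bf L}]^{\binom{d^k_i}{2}}[R_{d^k}^{\rm sst}(Q)]/[G_{d^k}]$. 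Finally, a direct computation yields $\langle\widehat e^a,\widehat e^b\rangle_{\widehat Q}=\langle d^a,d^b\rangle_Q-d^a_i d^b_i$, and $\binom{d_i}{2}-\sum_k\binom{d^k_i}{2}=\sum_{a<b}d^a_i d^b_i$, so the powers of $[{\bf L}]$ in the $Q$- and $\widehat Q$-HN expansions match exactly; the nontrivial HN contributions cancel and the semistable identity follows.

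The main obstacle is the combinatorial bookkeeping of Step 2: verifying that the multinomial count of $\widehat Q$-HN refinements combined with the theorem's coefficient factorises along $Q$-HN types precisely via Lemma \ref{l2}, and that the Euler-form discrepancy between $Q$ and $\widehat Q$ is exactly absorbed by the $\binom{d_i}{2}$ correction. Step 1 is largely formal once Lemmas \ref{l34} and \ref{l3} are in hand.
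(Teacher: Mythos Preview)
Your proposal is correct and follows essentially the same route as the paper's proof: first establish the identity for the full $R_d$ via Lemma~\ref{l3} specialized at $q=[{\bf L}]$ (combined with Lemma~\ref{l34}), then induct on the dimension vector using the motivic Harder--Narasimhan recursion on both $Q$ and $\widehat Q$, matching the nontrivial HN strata via the multinomial count of refinements and the Euler-form discrepancy $\langle d^a,d^b\rangle_Q-\langle\widehat{d^a},\widehat{d^b}\rangle_{\widehat Q}=d^a_id^b_i$ together with $\binom{d_i}{2}=\sum_k\binom{d^k_i}{2}+\sum_{a<b}d^a_id^b_i$.

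Two small remarks. First, your appeal to Lemma~\ref{l2} is more than is needed: the factorisation of $\prod_l(m_l!)^{-1}c_l^{m_l}\cdot\prod_l m_l!/\prod_k m^k_l!$ into $\prod_k\prod_l(m^k_l!)^{-1}c_l^{m^k_l}$ is just the elementary multinomial identity, and the paper carries this out directly without naming Lemma~\ref{l2} (which is reserved for the dual MPS formula). Second, watch the ordering in the HN exponent: the paper's convention is $[{\bf L}]^{-\sum_{k<l}\langle d^l,d^k\rangle}$ (with the larger index first in the Euler form), not $\langle d^a,d^b\rangle$ for $a<b$; this does not affect the argument since the discrepancy $d^a_id^b_i$ is symmetric, but your stated HN recursion should be adjusted to match.
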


\begin{proof} We start the proof by translating the identity of Lemma \ref{l3} into the ring $\mathcal{K}$. We note the following identities:
$$[{\rm GL}_n({\mathbb C})]=\prod_{i=0}^{n-1}([{\bf L}]^n-[{\bf L}]^i),\;\;\; [{\bf G}_m]=[{\bf L}]-1,\;\;\; [{\mathbb P}^{n-1}]=([{\bf L}]^n-1)/([{\bf L}]-1)=[n]_{[\bf L]}.$$

Then the above identity translates into
$$\frac{[{\bf L}]^{\binom{n}{2}}}{[{\rm GL}_n({\mathbb C})]}=\sum_{m_*\vdash n}\prod_{l\geq 1}\frac{1}{m_l!}\left(\frac{(-1)^{l-1}}{l[{\mathbb P}^{l-1}]}\right)^{m_l}\frac{1}{[{\bf G}_m^{\sum_lm_l}]}.$$

Replacing $n$ by $d_i$, multiplying by $[R_d(Q)]/\prod_{j\not=i}[{\rm GL}_{d_j}({\mathbb C})]$ and using the above identifications and Lemma \ref{l34}, this yields the MPS formula for trivial stability $\Theta=0$:
$$[{\bf L}]^{\binom{d_i}{2}}\frac{[R_d(Q)]}{[G_d]}=\sum_{m_*\vdash d_i}\prod_{l\geq 1}\frac{1}{m_l!}\left(\frac{(-1)^{l-1}}{l[{\mathbb P}^{l-1}]}\right)^{m_l}\frac{[R_{\widehat{d}(m_*)}(\widehat{Q})]}{[G_{\widehat{d}(m_*)}]}.$$

Now we make use of the Harder-Narasimhan stratification of $R_d(Q)$ constructed in \cite{ReHNS}: we fix a decomposition $d=d^1+\ldots+d^s$ into non-zero dimension vectors such that $\mu(d^1)>\ldots>\mu(d^s)$, which we call a HN type for $d$, denoted by $d^*=(d^1,\ldots,d^s)\models d$. Denote by $R_d^{d^*}(Q)$ the set of all representations $M\in R_d(Q)$ such that in the Harder-Narasimhan filtration $0=M^0\subset M^1\subset\ldots\subset M^s=M$ of $M$, the dimension vector of $M^i/M^{i-1}$ equals $d^i$ for all $i=1,\ldots,s$. By \cite{ReHNS}, we have
$$R_d^{d^*}(Q)\simeq G_d\times^{P_{d^*}}V_{d^*},$$
where $P_{d^*}$ is a parabolic subgroup of $G_d$ with Levi isomorphic to $\prod_{k=1}^s G_{d^k}$, and $V_{d^*}$ is a vector bundle over $\prod_{k=1}^s R_{d^k}^{\rm sst}(Q)$ of rank $r_{d^*}=\sum_{k<l}\sum_{p\rightarrow q}d^l_pd^k_q$. This implies the following identity in $\mathcal{K}$:
$$[R_d(Q)]=\sum_{d^*\models d}[G_d\times^{P_{d^*}}V_{d^*}]=\sum_{d^*\models d}\frac{[G_d]}{[P_{d^*}]}[{\bf L}]^{r_{d^*}}[\prod_k R_{d^k}^{\rm sst}(Q)].$$
Using $[P_{d^*}]=[{\bf L}]^{\sum_{k<l}\sum_id^l_id^k_i}\prod_k[G_{d^k}]$ and dividing by $[G_d]$, this yields a motivic version of the Harder-Narasimhan recursion of \cite{ReHNS} determining $[R_d^{\rm sst}(Q)]/[G_d]$ recursively:
$$\frac{[R_d(Q)]}{[G_d]}=\sum_{d^*\models d}[{\bf L}]^{-\sum_{k<l}\langle d^l,d^k\rangle}\prod_{k=1}^s\frac{[R_{d^k}^{\rm sst}(Q)]}{[G_{d^k}]}.$$

We now derive the MPS formula by induction over the dimension vector $d$. The induction starts with $d$ of total dimension one. In this case, all points are semistable, and the formula is already proved. We now compute $[{\bf L}]^{\binom{d_i}{2}}[R_d(Q)]/[G_d]$ in two ways. By applying first the MPS formula for trivial stability, then the HN recursion, we get
$$[{\bf L}]^{\binom{d_i}{2}}\frac{[R_d(Q)]}{[G_d]}=\sum_{m_*\vdash d_i}\prod_{l\geq 1}\frac{1}{m_l!}\left(\frac{(-1)^{l-1}}{l[{\mathbb P}^{l-1}]}\right)^{m_l}\sum_{\widehat{d}^*\models\widehat{d}(m_*)}[{\bf L}]^{-\sum_{k<l}\langle \widehat{d}^l,\widehat{d}^k\rangle}\prod_{k=1}^s\frac{[R_{\widehat{d}^k}^{\rm sst}(\widehat{Q})]}{[G_{\widehat{d}^k}]}.$$
In choosing a HN type $\widehat{d}^*\models \widehat{d}(m_*)$, we choose a HN type $d^*\models d$, together with set partitions $\{1,\ldots,m_l\}=\bigcup_kI^k_l$ for all $l\geq 1$ such that for $m^k_l=|I^k_l|$, we have $\sum_llm^k_l=d^k_i$ for all $k=1,\ldots,s$; Lemma \ref{l34} ensures that the respective slope conditions are compatible. Note that $R_{\widehat{d}^k}^{\rm sst}(\widehat{Q})$ and $G_{\widehat{d}^k}$ only depend on $d^*$ and on the $m^k_l$, but not on the actual parts $I^k_l$; in fact, $\widehat{d}^k=\widehat{d^k}(m^k_*)$. A short calculation using the definition of $\widehat{Q}$ shows that
$$\langle d^l,d^k\rangle-\langle \widehat{d}^l,\widehat{d}^k\rangle=d^l_id^k_i.$$
Thus the above sum can be rewritten as

$$\sum_{d^*\models d}\sum_{(m^k_*\vdash d^k_i)_k}\prod_{l\geq 1}\frac{(\sum_km^k_l)!}{\prod_km^k_l!}\prod_{l\geq 1}\frac{1}{\sum_km^k_l)!}\left(\frac{(-1)^{l-1}}{l[{\mathbb P}^{l-1}]}\right)^{\sum_km^k_l}\times$$
$$\times[{\bf L}]^{-\sum_{k<l}\langle d^l,d^k\rangle+\sum_{k<l}d^l_id^k_i}\prod_{k=1}^s\frac{[R_{\widehat{d^k}(m^k_*)}^{\rm sst}(\widehat{Q})]}{[G_{\widehat{d^k}(m^k_*)}]}=$$

$$=[{\bf L}]^{\binom{d_i}{2}}\sum_{d^*\models d}[{\bf L}]^{-\sum_{k<l}\langle d^l,d^k\rangle}\prod_{k=1}^s[{\bf L}]^{-{\binom{d^k_i}{2}}}\sum_{m_*\vdash d^k_i}\prod_{l\geq 1}\frac{1}{m_l!}\left(\frac{(-1)^{l-1}}{l[{\mathbb P}^{l-1}]}\right)^{m_l}\frac{[R_{\widehat{d^k}(m^k_*)}^{\rm sst}(\widehat{Q})]}{[G_{\widehat{d^k}(m^k_*)}]}.$$
On the other hand, $[{\bf L}]^{\binom{d_i}{2}}[R_d(Q)]/[G_d]$ evaluates to
$$[{\bf L}]^{\binom{d_i}{2}}\frac{[R_d(Q)]}{[G_d]}=[{\bf L}]^{\binom{d_i}{2}}\sum_{d^*\models d}[{\bf L}]^{-\sum_{k<l}\langle d^l,d^k\rangle}\prod_{k=1}^s\frac{[R_{d^k}^{\rm sst}(Q)]}{[G_{d^k}]}$$
by the HN recursion. Using the inductive hypothesis, all summands of the last two sums corresponding to HN types of length at least two coincide, thus the summands corresponding to the trivial HN type $d$ coincide; this yields the MPS formula for $d$.\end{proof}

Sometimes it might be convenient to rewrite the MPS formula in terms of partitions instead of multiplicity vectors:

\begin{kor} We have
$$[{\bf L}]^{\binom{d_i}{2}}\frac{[R_d^{\rm sst}(Q)]}{[G_d]}=\sum_{\lambda\vdash d_i}\varepsilon_\lambda z_\lambda^{-1}\frac{1}{\prod_j[{\mathbb P}^{\lambda_j-1}]}\frac{[R_{\widehat{d}(m_*(\lambda))}^{\rm sst}(\widehat{Q})]}{[G_{\widehat{d}(m_*(\lambda))}]}.$$
\end{kor}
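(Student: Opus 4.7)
The plan is to show that the Corollary is a straightforward rewriting of Theorem \ref{motivicMPS} under the bijection between multiplicity vectors $m_*\vdash d_i$ and partitions $\lambda\vdash d_i$, the one given by $\lambda\mapsto m_*(\lambda)$ and, inversely, by $m_*\mapsto\lambda(m_*)=(1^{m_1}2^{m_2}\ldots)$ as recalled in the preliminaries on symmetric functions. Under this correspondence the dimension vector $\widehat{d}(m_*)$ appearing in Theorem \ref{motivicMPS} coincides with $\widehat{d}(m_*(\lambda))$ in the Corollary, so the ratios $[R^{\rm sst}_{\widehat{d}(m_*)}(\widehat{Q})]/[G_{\widehat{d}(m_*)}]$ match term by term, and the only thing to verify is that the scalar coefficients in $\mathcal{K}$ agree.

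First I would unfold the definitions $\varepsilon_\lambda=(-1)^{|\lambda|-l(\lambda)}$ and $z_\lambda=\prod_l m_l(\lambda)!\,l^{m_l(\lambda)}$, and use $|\lambda|=\sum_l l\,m_l(\lambda)$ together with $l(\lambda)=\sum_l m_l(\lambda)$ to rewrite
\[
(-1)^{|\lambda|-l(\lambda)}=\prod_l((-1)^{l-1})^{m_l(\lambda)}.
\]
This gives
\[
\varepsilon_\lambda z_\lambda^{-1}=\prod_{l\geq 1}\frac{1}{m_l(\lambda)!}\left(\frac{(-1)^{l-1}}{l}\right)^{m_l(\lambda)},
\]
which is exactly the scalar prefactor in the statement of Theorem \ref{motivicMPS}.

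Next I would handle the projective space factors. Since in the partition $\lambda$ the part $l$ appears with multiplicity $m_l(\lambda)$, grouping equal parts yields
\[
\frac{1}{\prod_j[\mathbb{P}^{\lambda_j-1}]}=\prod_{l\geq 1}\frac{1}{[\mathbb{P}^{l-1}]^{m_l(\lambda)}}.
\]
Combining the two displays,
\[
\varepsilon_\lambda z_\lambda^{-1}\frac{1}{\prod_j[\mathbb{P}^{\lambda_j-1}]}=\prod_{l\geq 1}\frac{1}{m_l(\lambda)!}\left(\frac{(-1)^{l-1}}{l\,[\mathbb{P}^{l-1}]}\right)^{m_l(\lambda)},
\]
which matches the corresponding coefficient in Theorem \ref{motivicMPS}. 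Substituting this into the partition-indexed sum and using the bijection $\lambda\leftrightarrow m_*$ yields the claimed identity; there is no real obstacle since the argument is entirely a bookkeeping translation, with the only subtle point being the sign rewriting above.
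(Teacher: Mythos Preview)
Your proposal is correct and matches the paper's approach exactly: the paper presents this Corollary without proof, introducing it merely as a convenient rewriting of Theorem \ref{motivicMPS} ``in terms of partitions instead of multiplicity vectors,'' and your argument supplies precisely the routine bookkeeping (the bijection $\lambda\leftrightarrow m_*$ together with the identities $\varepsilon_\lambda z_\lambda^{-1}=\prod_l\frac{1}{m_l!}\big(\frac{(-1)^{l-1}}{l}\big)^{m_l}$ and $\prod_j[\mathbb{P}^{\lambda_j-1}]^{-1}=\prod_l[\mathbb{P}^{l-1}]^{-m_l}$) that justifies this.
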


There is a well-defined ring homomorphism $\pi:\mathcal{K}\rightarrow {\mathbb Q}(t)$ mapping the class of a smooth projective variety $X$ to its Poincar\'e polynomial $P(X,t)=\sum_i\dim H^i(X,{\mathbb Q})t^i$ in singular cohomology. In the case where the dimension vector $d$ is $\Theta$-coprime, that is, $\mu(e)\not=\mu(d)$ for all non-zero dimension vectors $e<d$, the moduli space $M^{\rm sst}_d(Q)=R_d^{\rm sst}(Q)/G_d({\mathbb C})$ is a smooth variety, and we have
$$P(M^{\rm sst}_d(Q),t)=(t^2-1)\cdot \pi([R^{\rm sst}_d(Q)]/[G_d])$$
by \cite[Theorem 2.5]{ER}. Specialization of the motivic MPS formula to this case yields a formula for its Poincar\'e polynomial, and in particular for its Euler characteristic:

\begin{kor} If $d$ is $\Theta$-coprime, we have
$$t^{d_i(d_i-1)}P(M^{\rm sst}_d(Q),t)=\sum_{m_*\vdash d_i}\prod_{l\geq 1}\frac{1}{m_l!}\left(\frac{(-1)^{l-1}}{l[l]_{t^2}}\right)^{m_l}P(M_{\widehat{d}(m_*)}^{\rm sst}(\widehat{Q}),t)$$
and
$$\chi(M^{\rm sst}_d(Q))=\sum_{m_*\vdash d_i}\prod_{l\geq 1}\frac{1}{m_l!}\left(\frac{(-1)^{l-1}}{l^2}\right)^{m_l}\chi(M_{\widehat{d}(m_*)}^{\rm sst}(\widehat{Q})).$$
\end{kor}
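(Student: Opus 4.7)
The plan is to specialize the motivic MPS identity of Theorem \ref{motivicMPS} via the ring homomorphism $\pi:\mathcal{K}\to{\mathbb Q}(t)$ sending $[{\bf L}]\mapsto t^2$, and then further to specialize $t\to 1$ for the Euler characteristic statement. Under $\pi$ one has $\pi([{\mathbb P}^{l-1}])=[l]_{t^2}$ and $\pi([{\bf L}]^{\binom{d_i}{2}})=t^{d_i(d_i-1)}$. Multiplying both sides of the image identity by $(t^2-1)$ and applying \cite[Theorem 2.5]{ER}, which asserts
\[P(M^{\rm sst}_e(R),t)=(t^2-1)\,\pi\!\left([R_e^{\rm sst}(R)]/[G_e]\right)\]
for any $\Theta$-coprime dimension vector $e$ on an acyclic quiver $R$, would recover the Poincar\'e polynomial formula termwise.

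The main technical point is to verify that $\widehat{d}(m_*)$ is $\widehat{\Theta}$-coprime for every partition $m_*\vdash d_i$, so that the cited formula can be legitimately applied on the right hand side. Given a proper non-zero subdimension vector $\widehat{e}\leq\widehat{d}(m_*)$ satisfying $\widehat{\mu}(\widehat{e})=\widehat{\mu}(\widehat{d}(m_*))$, I would define an associated dimension vector $e$ of $Q$ by $e_j=\widehat{e}_j$ for $j\neq i$ and $e_i=\sum_{k,l}l\,\widehat{e}_{i_{k,l}}$. A direct computation in the spirit of the proof of Lemma \ref{l34} yields $\widehat{\Theta}(\widehat{e})=\Theta(e)$ and an analogous linear relation between $\widehat{\kappa}(\widehat{e})$ and $\kappa(e)$, from which $\mu(e)=\mu(d)$ follows; since $\widehat{e}$ being proper and non-zero forces $0<e<d$ strictly, this would contradict the $\Theta$-coprimality of $d$.

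The Euler characteristic identity then follows by setting $t=1$ in the Poincar\'e polynomial formula. The quiver moduli in question are smooth projective varieties with vanishing odd cohomology, so each Poincar\'e polynomial is an honest polynomial in $t^2$ and the specialization is well-defined termwise. Under $t\to 1$ the prefactor $t^{d_i(d_i-1)}$ tends to $1$ and the quantum integer $[l]_{t^2}$ tends to $l$, so the coefficient $\bigl((-1)^{l-1}/(l[l]_{t^2})\bigr)^{m_l}$ becomes $\bigl((-1)^{l-1}/l^2\bigr)^{m_l}$ and every Poincar\'e polynomial specializes to an Euler characteristic, giving exactly the second formula. The only non-routine ingredient in the whole argument is the lifting of coprimality to the covering quiver in the previous paragraph; once that is in hand, everything else is a direct application of $\pi$ and of \cite[Theorem 2.5]{ER}.
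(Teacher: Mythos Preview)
Your proposal is correct and follows exactly the route the paper takes: apply the ring homomorphism $\pi$ to the motivic identity of Theorem~\ref{motivicMPS}, multiply by $(t^2-1)$, invoke \cite[Theorem 2.5]{ER} on both sides, and then specialize $t\to 1$. The only difference is that you make explicit the verification that $\widehat{d}(m_*)$ is $\widehat{\Theta}$-coprime (so that \cite[Theorem 2.5]{ER} applies on the right hand side), a point the paper leaves implicit.
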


\subsection{Dual MPS formula}
We define a quiver $\check{Q}$ as the ``level one part" of $\widehat{Q}$, thus $\check{Q}_0=Q_0\setminus\{i\}\cup\{i_{k}\, :\, k\geq 1\}$, and the arrows in $\check{Q}$ are given by the following rules:
\begin{itemize}
\item all arrows $\alpha:j\rightarrow k$ in $Q$ which are not incident with $i$ induce an arrow $\alpha:j\rightarrow k$ in $\check{Q}$,
\item all arrows $\alpha:i\rightarrow j$ (resp.~$\alpha:j\rightarrow i$) in $Q$ for $j\not=i$ induce arrows $\alpha:i_{k}\rightarrow j$ (resp. $\alpha:j\rightarrow i_{k}$) for $k\geq 1$ in $\check{Q}$,
\item all loops $\alpha:i\rightarrow i$ in $Q$ induce arrows $\alpha_:i_{k}\rightarrow i_{k'}$ for $k,k'\geq 1$ in $\check{Q}$.
\end{itemize}

Given a dimension vector $d$ for $Q$ and a partition $\lambda\vdash d_i$, we define a dimension vector $\check{d}(\lambda)$ for $\check{Q}$ by $\check{d}(\lambda)_j=d_j$ for $j\not=i$ in $Q_0$ and $\check{d}(\lambda)_{i_{k}}=\lambda_k$ for $k\geq 1$.\\[1ex]
Application of the MPS formula to all vertices $i_k$ for $k\geq 1$ of $\check{Q}$ yields:
$$[{\bf L}]^{\sum_j{\binom{\lambda_j}{2}}}\frac{[R_{\check{d}(\lambda)}^{\rm sst}(\check{Q})]}{[G_{\check{d}(\lambda)}]}=\sum_{(m^j_*\vdash\lambda_j)_j}\prod_{j\geq 1}\prod_{l\geq 1}\frac{1}{m^j_l!}\left(\frac{(-1)^{l-1}}{l[{\mathbb P}^{l-1}]}\right)^{m_l^j}\frac{[R_{\widehat{d}(\sum_jm^j_*)}^{\rm sst}(\widehat{Q})]}{[G_{\widehat{d}(\sum_jm^j_*)}]}=$$
$$=\sum_{m_*\vdash d_i}(\sum_{m^*_*}\prod_{l\geq 1}\frac{m_l!}{\prod_jm^j_l!})\prod_{l\geq 1}\frac{1}{m_l!}\left(\frac{(-1)^{l-1}}{l[{\mathbb P}^{l-1}]}\right)^{m_l}\frac{[R_{\widehat{d}(m_*)}^{\rm sst}(\widehat{Q})]}{[G_{\widehat{d}(m_*)}]},$$
where the inner sum runs over all tuples $(m^j_l)_{j,l\geq 1}$ such that $m_l=\sum_jm^j_l$ for all $l$ and $\lambda_j=\sum_llm^j_l$ for all $j$. Comparison with Lemma \ref{l2} yields:

\begin{pro} There is a well-defined map $\mathcal{P}_{\mathbb Q}\rightarrow \mathcal{K}$ of ${\mathbb Q}$-vector spaces such that
$$e_\lambda\mapsto [{\bf L}]^{\sum_j{\binom{\lambda_j}{2}}}\frac{[R_{\check{d}(\lambda)}^{\rm sst}(\check{Q})]}{[G_{\check{d}(\lambda)}]},\;\;\; p_\lambda\mapsto \prod_j\frac{1}{[{\mathbb P}^{\lambda_j-1}]}\cdot\frac{[R_{\widehat{d}(m_*(\lambda))}^{\rm sst}(\widehat{Q})]}{[G_{\widehat{d}(m_*(\lambda))}]}.$$
\end{pro}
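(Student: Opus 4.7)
The plan is to take $(e_\lambda)_{\lambda}$ as a $\mathbb{Q}$-basis of $\mathcal{P}_{\mathbb{Q}}$ and to \emph{define} the map unambiguously by the first prescription, extending $\mathbb{Q}$-linearly; existence and uniqueness of this linear extension are automatic. The only genuine content of the proposition is then to verify that, under this definition, each power-sum $p_\lambda$ is sent to the element described in the second prescription. Equivalently, after expanding $e_\lambda$ in the $p$-basis via Lemma \ref{l2}, application of the second prescription term by term must reproduce the value assigned to $e_\lambda$ by the first prescription.

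To carry this out, I would first rewrite the $p$-prescription in a form adapted to the multiplicity vector bookkeeping: since $\lambda(m_*)=(1^{m_1}2^{m_2}\ldots)$, we have
\[
\prod_j \frac{1}{[\mathbb{P}^{\lambda(m_*)_j-1}]} \;=\; \prod_{l\geq 1}\left(\frac{1}{[\mathbb{P}^{l-1}]}\right)^{m_l},
\]
so substituting the second prescription into the right-hand side of Lemma \ref{l2} for $e_\lambda$ yields exactly
\[
\sum_{m_*\vdash n}\Bigl(\sum_{m^*_*}\prod_l\frac{m_l!}{\prod_j m^j_l!}\Bigr)\prod_{l\geq 1}\frac{1}{m_l!}\Bigl(\frac{(-1)^{l-1}}{l[\mathbb{P}^{l-1}]}\Bigr)^{m_l}\frac{[R_{\widehat{d}(m_*)}^{\rm sst}(\widehat{Q})]}{[G_{\widehat{d}(m_*)}]}.
\]
But this is precisely the right-hand side of the displayed calculation immediately preceding the proposition, which was derived by iterating Theorem \ref{motivicMPS} at each level-one vertex $i_k$ of $\check{Q}$. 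Hence the sum equals $[\mathbf{L}]^{\sum_j\binom{\lambda_j}{2}}[R^{\rm sst}_{\check{d}(\lambda)}(\check{Q})]/[G_{\check{d}(\lambda)}]$, which is exactly the value dictated by the first prescription on $e_\lambda$. This is the required compatibility.

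The routine verifications left are purely combinatorial: on one hand, that iterated vertex-splitting of $\check{Q}$ at the vertices $i_k$ recovers $\widehat{Q}$ with the amalgamated dimension vector $\widehat{d}(m_*)$ when $m_l=\sum_j m^j_l$, which follows directly by inspection of the vertex-splitting rules defining $\check{Q}$ and $\widehat{Q}$; and on the other hand, that the change of summation variable from tuples $(m^j_l)_{j,l}$ (grouped by the partition $\lambda$) to the global multiplicities $m_*$ produces the multinomial factor $\prod_l \frac{m_l!}{\prod_j m^j_l!}$, which is exactly the same factor that appears in Lemma \ref{l2}. No serious obstacle arises; the proof is a direct assembly of Lemma \ref{l2} with the iterated motivic MPS identity, and the main thing to be careful about is precisely this match of combinatorial weights on both sides.
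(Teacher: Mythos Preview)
Your proposal is correct and is essentially the paper's own argument: the derivation immediately preceding the proposition applies Theorem \ref{motivicMPS} at every vertex $i_k$ of $\check{Q}$ to produce the displayed sum, and the proposition then follows by matching this with Lemma \ref{l2}, exactly as you describe. Your write-up simply makes explicit the linear-algebra step (define the map on the $e$-basis and verify the $p$-prescription via the base change), which the paper leaves implicit in ``Comparison with Lemma \ref{l2} yields''.
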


It might be interesting to ask whether the images of other bases of the ring of symmetric functions (monomial symmetric functions, complete symmetric functions, Schur functions, ...) have a natural interpretation in terms of motives of quiver moduli.\\[2ex]
We can now map the second identity of Lemma \ref{l1} to $\mathcal{K}$ to get the following dual version of the MPS formula:

\begin{kor}\label{dualMPS} For given $d$, denote by $\widehat{d}^0$ the dimension vector for $\widehat{Q}$ with a single entry $1$ on level $d_i$. Then
$$\frac{1}{[{\mathbb P}^{d_i-1}]}\cdot\frac{[R_{\widehat{d}^0}^{\rm sst}(\widehat{Q})]}{[G_{\widehat{d}^0}(\widehat{Q})]}=(-1)^{d_i-1}d_i\sum_{\lambda\vdash d_i}(-1)^{l(\lambda)-1}\frac{(l(\lambda)-1)!}{\prod_lm_l(\lambda)!}[{\bf L}]^{\sum_j{\binom{\lambda_j}{2}}}\frac{[R_{\check{d}(\lambda)}^{\rm sst}(\check{Q})]}{[G_{\check{d}(\lambda)}]}.$$
\end{kor}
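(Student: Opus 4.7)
The plan is to obtain the corollary as a direct consequence of the preceding Proposition by applying the $\mathbb{Q}$-linear map $\mathcal{P}_{\mathbb Q}\to\mathcal{K}$ to the symmetric function identity of Lemma \ref{l1}(2), specialized to $n=d_i$. Concretely, I would rewrite
\[
p_{d_i}=(-1)^{d_i-1}d_i\sum_{\lambda\vdash d_i}\frac{(-1)^{l(\lambda)-1}}{l(\lambda)}\frac{l(\lambda)!}{\prod_l m_l(\lambda)!}e_\lambda
\]
and push both sides through the assignment provided by the Proposition.

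On the left-hand side, I need to identify the image of $p_{d_i}=p_{(d_i)}$. The partition $(d_i)$ has multiplicity vector $m_*$ with $m_{d_i}=1$ and $m_l=0$ otherwise, so $\widehat{d}(m_*((d_i)))$ is precisely the dimension vector $\widehat{d}^0$ that puts a single $1$ on the vertex $i_{1,d_i}$ (the unique vertex at level $d_i$ used by this multiplicity vector) and agrees with $d$ on the remaining vertices of $Q_0\setminus\{i\}$. Since the only nontrivial factor in $\prod_j[\mathbb{P}^{\lambda_j-1}]$ is $[\mathbb{P}^{d_i-1}]$, the Proposition yields
\[
p_{d_i}\mapsto \frac{1}{[\mathbb{P}^{d_i-1}]}\cdot\frac{[R_{\widehat{d}^0}^{\rm sst}(\widehat{Q})]}{[G_{\widehat{d}^0}(\widehat{Q})]},
\]
which matches the left-hand side of the claim.

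On the right-hand side, each $e_\lambda$ is sent to $[{\bf L}]^{\sum_j\binom{\lambda_j}{2}}[R_{\check d(\lambda)}^{\rm sst}(\check Q)]/[G_{\check d(\lambda)}]$ by the Proposition, and the combinatorial prefactor simplifies via $l(\lambda)!/l(\lambda)=(l(\lambda)-1)!$ to give exactly the coefficients $(-1)^{l(\lambda)-1}(l(\lambda)-1)!/\prod_l m_l(\lambda)!$ appearing in the statement. Assembling both sides produces the identity of the corollary.

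There is really no obstacle beyond bookkeeping: the only points to verify carefully are (i) that the Proposition is genuinely $\mathbb{Q}$-linear so that it can be applied termwise to the identity of Lemma \ref{l1}(2), which was established in the derivation preceding it via Lemma \ref{l2}, and (ii) the correct identification of $\widehat{d}(m_*((d_i)))$ with $\widehat{d}^0$. Both are immediate from the definitions, so no further argument is required.
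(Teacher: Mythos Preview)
Your proof is correct and follows exactly the route taken in the paper: apply the $\mathbb{Q}$-linear map $\mathcal{P}_{\mathbb{Q}}\to\mathcal{K}$ from the preceding Proposition to the identity of Lemma~\ref{l1}(2) with $n=d_i$, then identify the images of $p_{d_i}$ and of each $e_\lambda$. The bookkeeping checks you list (the identification of $\widehat{d}(m_*((d_i)))$ with $\widehat{d}^0$ and the simplification $l(\lambda)!/l(\lambda)=(l(\lambda)-1)!$) are exactly what is needed.
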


\section{The MPS formula as a degeneration formula in Gromov-Witten theory}\label{4}
In the rest of the paper for every bipartite quiver $Q$ we consider the linear form $\Theta\in\mathbb{N}Q_0$ defined by $\Theta_i=1$ for every $i\in Q(I)$ and $\Theta_j=0$ for every $j\in Q(J)$. Additionally fixing a level $l:Q\rightarrow\mathbb{N}^+$, we define the linear form $\Theta_l$ by $(\Theta_l)_q=l(q)\Theta_q$ and consider the slope $\mu=\Theta_l/\kappa$ where $\kappa$ is defined as in Section \ref{2}. Note that for the trivial level structure, i.e. $l(q)=1$ for every $q\in Q_0$, we have $\Theta_l=\Theta$ and $\kappa=\dim$.

In this section we specialize the MPS formula to Euler characteristics, and at the same time we restrict to a special class of quivers. These are the complete bipartite quivers $K(l_1, l_2)$ of \cite{rw} Section 5, defined by the vertices 
\begin{equation*}
K(l_1,l_2)_0= \{i_1,\ldots,i_{l_1}\}\cup\{j_1,\ldots,j_{l_2}\}
\end{equation*}
and the arrows 
\begin{equation*}
K(l_1,l_2)_1=\{\alpha_{k,l}:i_k\rightarrow j_l\mid k\in\{1,\ldots l_1\},l\in\{1,\ldots l_2\}\}.
\end{equation*}
A dimension vector for $K(l_1, l_2)$ is uniquely determined by a pair of \emph{ordered} partitions 
\begin{equation*}
({\bf P}_1,{\bf P}_2) = (\sum^{l_1}_{i=1} p_{1 i}, \sum^{l_2}_{j=1} p_{2 j}).
\end{equation*} 
We assume throughout this section that the sizes $|{\bf P}_1|$, $|{\bf P}_2|$ are \emph{coprime}. We fix the trivial level structure given by $l(q)=1$ for all $q\in K(l_1,l_2)_0$. We denote by 
\begin{equation*}
M^{\Theta-\rm st}({\bf P}_1,{\bf P}_2) = M^{\Theta-\rm st}_{({\bf P}_1,{\bf P}_2)}(K(l_1,l_2))
\end{equation*} 
the moduli space of stable representations with respect to this choice. 

The MPS formula in this context can be expressed uniformly for all $l_1, l_2$ and all dimension vectors by introducing an infinite quiver $\mathcal{N}$ with a suitable level structure.  We define its vertices by
\begin{equation*}
\mathcal{N}_0=\{i_{(w,m)}\mid (w,m)\in\Nn^2\}\cup \{j_{(w,m)}\mid (w,m)\in\Nn^2\},
\end{equation*}
and the arrows by 
\begin{equation*}
\mathcal{N}_1=\{\alpha_1,\ldots,\alpha_{w\cdot w'}:i_{(w,m)}\rightarrow j_{(w',m')},\forall\,w,w',m,m'\in\Nn\}.
\end{equation*}
The level function is given by
\begin{equation*}
l(q_{(w,m)})=w,\forall\,q\in\{i,j\}, m\in\Nn
\end{equation*}
and we fix the linear form $\Theta_l$.
A \emph{refinement} of $({\bf P}_1, {\bf P}_2)$ is a pair of sets of integers 
\begin{equation*}
(k^1, k^2) = (\{k^1_{w i}\}, \{k^2_{w j}\})
\end{equation*} 
such that for $i=1,\ldots,l_1$ and $j=1,\ldots, l_2$ we have
\begin{equation*}
p_{1 i}=\sum_w wk_{w i}^1,\,p_{2 j}=\sum_w wk_{w j}^2.
\end{equation*}
We will denote refinements by $(k^1,k^2)\vdash ({\bf P}_1,{\bf P}_2)$. The number of entries of weight $w$ in $k^i$ is defined by
\begin{equation*}
m_w(k^i)=\sum_{j=1}^{l_i} k^i_{w j}.
\end{equation*}
A fixed refinement $(k^1, k^2)$ induces a dimension vector for $\mathcal{N}$ by setting 
\begin{equation*}
d_{q_{(w, m)}} = \left\{
\begin{matrix}
1\textrm{ for } m=1, \ldots, m_w(k^p),\\
0\textrm{ for } m > m_w(k^p),\,\,\,\,\,\,\,\,\,\,\,\,\,\,\,\,
\end{matrix}\right.
\end{equation*}
for $q \in \{i, j\}$, and $p = 1, 2$ for $q = i, j$. With this notation in place, the MPS formula at the level of Euler characteristics can be expressed by
\begin{equation}\label{mps}
\chi(M^{\Theta-\rm st}({\bf P}_1,{\bf P}_2)) = \sum_{(k^1,k^2)\vdash({\bf P}_1,{\bf P}_2)}\chi(M^{\Theta_l-\rm st}_{(k^1,k^2)}(\mathcal{N}))\prod^2_{i=1}\prod^{l_i}_{j=1}\prod_{w}\frac{(-1)^{k^i_{w, j}(w-1)}}{k^i_{w, j}!w^{2k^i_{w, j}}}.
\end{equation}

The ordered partition $({\bf P}_1,{\bf P}_2)$ also encodes an a priori very different kind of data, namely the Gromov-Witten invariant $N[({\bf P}_1,{\bf P}_2)]$ of \cite{gps} Section 0.4. Roughly speaking this is a virtual count of rational curves in the weighted projective plane $\PP(|{\bf P}_1|, |{\bf P}_2|, 1)$ which pass through $l_j$ specified distinct points lying on the distinguished toric divisor $D_j$ for $j = 1, 2$. We require that these points are not fixed by the torus action, that the multiplicities at the points are specified by ${\bf P}_j$, and that the curve touches the remaining toric divisor $D_{\rm out}$ at some point which is also not fixed by the torus. The refined GW/Kronecker correspondence of \cite{rw} (based on \cite{gps}, \cite{gp}) leads to a rather striking consequence (\cite{rw} Corollary 9.1):
\begin{equation*}\label{mpsfor}
N[({\bf P}_1,{\bf P}_2)] = \chi(M^{\Theta-\rm st}({\bf P}_1,{\bf P}_2)).
\end{equation*}

The powerful degeneration formula of Gromov-Witten theory (\cite{ionel}, \cite{ruan}, \cite{li}) allows one to express $N[({\bf P}_1,{\bf P}_2)]$ in terms of certain \emph{relative} Gromov-Witten invariants, enumerating rational curves with \emph{tangency} conditions, as we now briefly discuss. Following \cite{gps} Section 2.3, we define a \emph{weight vector} ${\bf w}_i$ as a sequence of integers $(w_{i1}, \ldots, w_{i t_i})$ with
\begin{equation*}
0 < w_{i1} \leq w_{i2} \leq \cdots \leq w_{i t_i}.
\end{equation*}
The \emph{automorphism group} $\Aut({\bf w}_i)$ of a weight vector ${\bf w}_i$ is the subset of the symmetric group on $t_i$ letters which stabilizes ${\bf w}_i$. A pair of weight vectors $({\bf w}_1, {\bf w}_2)$ encodes a relative Gromov-Witten invariant $N^{\rm rel}[({\bf w}_1, {\bf w}_2)]$, virtually enumerating rational curves in $\PP(|{\bf w}_1|,|{\bf w}_2|,1)$ which are tangent to $D_i$ at specified points (not fixed by the torus), with order of tangency specified by ${\bf w}_i$. The rigorous construction of these invariants is carried out in \cite{gps} Section 4.4. Let us now fix weight vectors ${\bf w}_i$ with $|{\bf w}_i| = |{\bf P}_i|$ for $i = 1, 2$. A \emph{set partition} $I_{\bullet}$ of ${\bf w}_i$ is a decomposition of the index set  
\begin{equation*}
I_1 \cup \cdots \cup I_{l_i} = \{1, \ldots, t_i\}
\end{equation*}
into $l_i$ disjoint, possibly empty parts. We say that the set partition $I_{\bullet}$ is \emph{compatible} with ${\bf P}_i$ (or simply compatible) if for all $j$ we have 
\begin{equation*}
p_{ij} = \sum_{r \in I_j} w_{ir}.
\end{equation*}
The relevant degeneration formula involves the ramification factors 
\begin{equation*}
R_{{\bf P}_i \mid {\bf w}_i} = \sum_{I_{\bullet}}\prod^{t_i}_{j = 1}\frac{(-1)^{w_{ij}-1}}{w^2_{ij}}, 
\end{equation*}
where we are summing over all compatible set partitions $I_{\bullet}$. Then Proposition 5.3 from \cite{gps} yields the equality
\begin{equation}\label{deg}
N[({\bf P}_1,{\bf P}_2)] = \sum_{({\bf w}_1, {\bf w}_2)} N^{\rm rel}[({\bf w}_1, {\bf w}_2)]\prod^2_{i = 1} \frac{\prod^{t_i}_{j = 1} w_{ij}}{|\Aut({\bf w}_i)|} R_{{\bf P}_i \mid {\bf w}_i}.
\end{equation}
We come to the central claim of this section:
\begin{thm}\label{mpsdeg} Given the equality $N[({\bf P}_1,{\bf P}_2)] = \chi(M^{\Theta-\rm st}({\bf P}_1,{\bf P}_2))$ (i.e. the coprime case of the refined GW/Kronecker correspondence), the MPS formula \eqref{mps} for the Euler characteristics of quiver representations is equivalent to the Gromov-Witten degeneration formula \eqref{deg}.
\end{thm}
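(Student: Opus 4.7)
The strategy is to manipulate the right-hand side of the degeneration formula \eqref{deg} until it coincides term-by-term with the right-hand side of the MPS formula \eqref{mps}, once the left-hand sides are identified by the hypothesis $N[({\bf P}_1,{\bf P}_2)]=\chi(M^{\Theta-\rm st}({\bf P}_1,{\bf P}_2))$.

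First I would expand each ramification factor $R_{{\bf P}_i\mid{\bf w}_i}$ as a sum over compatible set partitions $I_\bullet$. Crucially, the summand $\prod_{r=1}^{t_i}\frac{(-1)^{w_{ir}-1}}{w_{ir}^2}$ depends only on ${\bf w}_i$, not on $I_\bullet$, so $R_{{\bf P}_i\mid{\bf w}_i}$ factors as the number of compatible partitions times this product. Next, I would change the order of summation from pairs $(({\bf w}_1,{\bf w}_2),(I^1_\bullet,I^2_\bullet))$ to refinements $(k^1,k^2)\vdash({\bf P}_1,{\bf P}_2)$. A refinement simultaneously encodes the weight vector ${\bf w}(k^i)$ (via the multiplicities $m_w(k^i)=\sum_j k^i_{w,j}$) and the numerical type of the set partition; the number of compatible set partitions realizing a given refinement equals $\prod_w\frac{m_w(k^i)!}{\prod_j k^i_{w,j}!}$. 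This cancels neatly against $|\Aut({\bf w}(k^i))|=\prod_w m_w(k^i)!$ in \eqref{deg}. After simplifying the remaining $\prod_r w_{ir}/w_{ir}^2=\prod_r 1/w_{ir}$ and regrouping the signs $\prod_r(-1)^{w_{ir}-1}=\prod_{w,j}(-1)^{(w-1)k^i_{w,j}}$, the degeneration formula becomes
\begin{equation*}
N[({\bf P}_1,{\bf P}_2)]=\sum_{(k^1,k^2)\vdash({\bf P}_1,{\bf P}_2)} N^{\rm rel}[({\bf w}(k^1),{\bf w}(k^2))]\prod_{i=1}^2\prod_{w,j}\frac{(-1)^{(w-1)k^i_{w,j}}}{k^i_{w,j}!\,w^{k^i_{w,j}}}.
\end{equation*}

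Finally, I would invoke Proposition \ref{eulgw} identifying $\chi(M^{\Theta_l-\rm st}_{(k^1,k^2)}(\mathcal{N}))$ with the tropical count $N^{\rm trop}[({\bf w}(k^1),{\bf w}(k^2))]$, combined with the standard tropical/relative GW correspondence of \cite{gps} asserting $N^{\rm trop}[({\bf w}_1,{\bf w}_2)]=\bigl(\prod_{i,r}w_{ir}\bigr)\cdot N^{\rm rel}[({\bf w}_1,{\bf w}_2)]$. Substituting $N^{\rm rel}=\chi/\prod w$ into the rewritten degeneration formula contributes an additional $\prod_{w,j}1/w^{k^i_{w,j}}$, converting $w^{k^i_{w,j}}$ into $w^{2k^i_{w,j}}$ in the denominator and thereby reproducing exactly the MPS coefficients in \eqref{mps}. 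Identifying the left-hand sides by the refined GW/Kronecker correspondence then yields the equivalence in both directions.

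The main technical input is Proposition \ref{eulgw}, whose content (enumeration of trees compatible with localization data versus tropical curves) is non-trivial; once it is in hand, the rest of the proof is a careful combinatorial bookkeeping exercise. The delicate point to verify is that the factor $\prod_{i,r}w_{ir}$ appearing in the tropical/relative correspondence is precisely what bridges the mismatch between $w^{k^i_{w,j}}$ in the repackaged degeneration formula and $w^{2k^i_{w,j}}$ in the MPS formula; confirming this discrepancy is exactly one power of the weights on the unbounded edges is what ties the two formulas together.
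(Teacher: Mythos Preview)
Your proposal is correct and follows essentially the same route as the paper: first rewrite the degeneration formula \eqref{deg} as a sum over refinements $(k^1,k^2)$ by trading the pair (weight vector, compatible set partition) for a refinement and cancelling the multinomial count $\prod_w m_w(k^i)!/\prod_j k^i_{w,j}!$ against $|\Aut({\bf w}(k^i))|$, then reduce the termwise comparison with \eqref{mps} to the identity $\chi(M^{\Theta_l-\rm st}_{(k^1,k^2)}(\mathcal{N}))=N^{\rm rel}[({\bf w}(k^1),{\bf w}(k^2))]\prod w^{k^i_{w,j}}$, which is exactly Proposition~\ref{eulgw} combined with the tropical/relative comparison \eqref{equiv}. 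Your identification of the single extra power of the edge weights as the bridge between $w^{k^i_{w,j}}$ and $w^{2k^i_{w,j}}$ is precisely the point the paper isolates in equations \eqref{relative}--\eqref{equiv}.
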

The rest of this section is devoted to a proof of this result. As a first step, to simplify the comparison, we will rewrite the degeneration formula \eqref{deg} as a sum over pairs of refinements $(k^1, k^2)$ rather than pairs of weight vectors $({\bf w}_1, {\bf w}_2)$. Notice that a fixed refinement $k^i$ induces a weight vector ${\bf w}(k^i) = (w_{i1}, \ldots, w_{it_i})$ of length $t_i = \sum_w m_w(k^i)$, by
\begin{equation*}
w_{ij}=w\text{ for all }j=\sum_{r=1}^{w-1}m_r(k^i)+1,\ldots,\sum_{r=1}^{w}m_r(k^i).
\end{equation*}  
Of course the weight vector ${\bf w}(k^i)$ only depends on $k^i$ through $\{m_w(k^i)\}_w$. However we wish to think of ${\bf w}(k^i)$ as coming with a distinguished set partition $I_{\bullet}(k^i)$: the segment of weight $w$ entries in ${\bf w}(k^i)$ is partitioned into $l_i$ consecutive chunks of size $k^1_{w1}, \cdots, k^1_{w l_i}$, and we declare the indices for the $j$-th chunk to lie in $I_j(k^i)$. 
\begin{lem} The degeneration formula for Gromov-Witten invariants \eqref{deg} can be rewritten as
\begin{eqnarray*}N[({\bf P}_1,{\bf P}_2)]&=&\sum_{(k_1,k_2)\vdash ({\bf P}_1,{\bf P}_2)}N^{\rm rel}[({\bf w}(k^1), {\bf w}(k^2) )]\prod^2_{i=1}\prod^{l_i}_{j=1}\prod_{w}\frac{(-1)^{k^i_{w,j}(w-1)}}{k^i_{w,j}!w^{k^i_{w,j}}}.
\end{eqnarray*}
\end{lem}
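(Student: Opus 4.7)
The plan is to reorganize the sum over pairs of weight vectors in \eqref{deg} as a sum over pairs of refinements by first unpacking the bijection between a refinement $k^i$ of ${\bf P}_i$ and the combinatorial data $({\bf w}_i, I_\bullet)$ of a weight vector with a compatible set partition. Given such a pair $({\bf w}_i, I_\bullet)$, setting $k^i_{w,j}$ equal to the number of indices $r\in I_j$ with $w_{ir}=w$ produces a refinement $k^i$ with ${\bf w}(k^i)={\bf w}_i$. Conversely a refinement $k^i\vdash{\bf P}_i$ recovers ${\bf w}(k^i)$ via the multiplicities $m_w(k^i)=\sum_j k^i_{w,j}$, but two set partitions differing by permuting equal-weight entries yield the same refinement. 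A standard multinomial count then shows that, for fixed ${\bf w}_i$, exactly $\prod_w m_w(k^i)!/\prod_j k^i_{w,j}!$ compatible set partitions reduce to a given refinement $k^i$.

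Next I would compute the three pieces of data in \eqref{deg} for a fixed weight vector ${\bf w}_i$: the automorphism factor $|\Aut({\bf w}_i)|=\prod_w m_w(k^i)!$; the product $\prod_j w_{ij}=\prod_w w^{m_w(k^i)}$; and the summand $\prod_j (-1)^{w_{ij}-1}/w_{ij}^2$ in $R_{{\bf P}_i\mid{\bf w}_i}$, which equals $\prod_w ((-1)^{w-1}/w^2)^{m_w(k^i)}$ and hence depends only on ${\bf w}_i$ (not on $I_\bullet$). Assembling these, the coefficient of $N^{\rm rel}[({\bf w}_1,{\bf w}_2)]$ in \eqref{deg} equals
\begin{equation*}
\prod_{i=1}^{2}\frac{\prod_w w^{m_w(k^i)}}{\prod_w m_w(k^i)!}\cdot\sum_{I_\bullet}\prod_w\left(\frac{(-1)^{w-1}}{w^2}\right)^{m_w(k^i)}.
\end{equation*}
I would then use the overcount formula above to replace $\sum_{I_\bullet}$ by a sum over refinements $k^i$ with $m_w(k^i)$ matching the multiplicities of ${\bf w}_i$, weighted by $\prod_w m_w(k^i)!/\prod_j k^i_{w,j}!$. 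A short algebraic manipulation, using $w^{m_w(k^i)}=\prod_j w^{k^i_{w,j}}$ and $(-1)^{(w-1)m_w(k^i)}=\prod_j(-1)^{(w-1)k^i_{w,j}}$, factors the resulting expression into a product indexed by $(w,j)$, producing exactly $\prod_{w,j}(-1)^{k^i_{w,j}(w-1)}/(k^i_{w,j}!\,w^{k^i_{w,j}})$.

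The rewriting is then completed by observing that every refinement $k^i\vdash{\bf P}_i$ arises from a unique underlying weight vector ${\bf w}(k^i)$, and that $N^{\rm rel}$ depends only on the weight vectors; hence summing first over $({\bf w}_1,{\bf w}_2)$ and then over refinements with those prescribed multiplicities is the same as summing over all pairs of refinements $(k^1,k^2)\vdash({\bf P}_1,{\bf P}_2)$. The main obstacle is purely combinatorial bookkeeping: matching the multinomial overcount $\prod_w m_w(k^i)!/\prod_j k^i_{w,j}!$ cleanly against $|\Aut({\bf w}_i)|$ and the factors of $w$ coming from $\prod_j w_{ij}$, so that the coefficient splits as a product over $(w,j)$ and the signs distribute correctly across the $j$-indices. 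No further geometric or enumerative input is needed beyond \eqref{deg} itself.
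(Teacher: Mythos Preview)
Your proposal is correct and follows essentially the same route as the paper: both arguments reindex the sum over $({\bf w}_i,I_\bullet)$ by refinements $k^i$, use the multinomial count $\prod_w m_w(k^i)!/\prod_j k^i_{w,j}!$ for the number of compatible set partitions mapping to a given $k^i$, compute $|\Aut({\bf w}_i)|=\prod_w m_w(k^i)!$, and then cancel to obtain the product over $(w,j)$. The only cosmetic difference is that the paper phrases the overcount via an equivalence relation on set partitions and a product of binomial coefficients before simplifying, whereas you state the multinomial directly.
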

\begin{proof} Consider the following operation on a compatible set partition $I_{\bullet}$: if there exist $a \in I_p$ and $b \in I_q$ with $p \neq q$ and $w_{i, a} = w_{i, b}$, then permuting the indices $p, q$ yields a new set partition $I'_{\bullet}$ which is still compatible. We write $[I_{\bullet}]$ for the equivalence class of set partitions generated by this operation. For a set partition $I_{\bullet}(k^i)$ which is induced by a refinement $k^i$, a simple count shows that the equivalence class $[I_{\bullet}(k^i)]$ contains
\begin{equation*}
\prod_w\binom{m_w(k^i)}{k^i_{w,1}}\binom{m_w(k^i)-k^i_{w,1}}{k^i_{w,2}}\ldots\binom{m_w(k^i)-\sum_{r=1}^{l_i-1}k^i_{w,r}}{k^i_{w,l_i}}
\end{equation*}
distinct elements. 

The formula \eqref{deg} is equivalent to
\begin{equation*}
N[({\bf P}_1,{\bf P}_2)] = \sum_{({\bf w}_1, {\bf w}_2)} \sum_{I^1_{\bullet}} \sum_{I^2_{\bullet}} N^{\rm rel}[({\bf w}_1, {\bf w}_2)] \prod^2_{i = 1} \frac{1}{|\Aut({\bf w}_i)|}\prod^{t_i}_{j = 1}\frac{(-1)^{w_{ij}-1}}{w_{ij}},  
\end{equation*}
where we are summing over all compatible set partitions. But we can enumerate the data ${\bf w}_i, I^i_{\bullet}$ differently: namely, rather than fixing ${\bf w}_i$ and considering all admissible $I^i_{\bullet}$, we can fix a refinement $k^i$, form the weight vector ${\bf w}(k^i)$ and restrict to the set partitions in the class $[I_{\bullet}(k^i)]$. In this case we have
\begin{equation*}
|\Aut({\bf w}(k^i))| = \prod_w m_w(k^i)!,\,\,\,\prod^{t_i}_{j = 1}\frac{(-1)^{w_{ij}-1}}{w_{ij}} = \prod^{l_i	}_{j=1}\prod_{w}\frac{(-1)^{k^i_{w,j}(w-1)}}{w^{k^i_{w,j}}}.
\end{equation*}
Thus the right hand side of the above equation becomes
\begin{align*}
\sum_{(k_1,k_2)\vdash ({\bf P}_1,{\bf P}_2)}&N^{\rm rel}[({\bf w}(k^1) , {\bf w}(k^2))]\\
&\cdot\prod^2_{i = 1}\prod^{l_i	}_{j=1}\prod_{w}\frac{(-1)^{k^i_{w,j}(w-1)}}{w^{k^i_{w,j}}}\frac{1}{m_w(k^i)!}\binom{m_w(k^i) - \sum^{j - 1}_{r = 1} k^i_{w, r}}{k^i_{w, j}}.
\end{align*}  
The result follows from the simple calculation
\begin{equation*}
\frac{1}{m_w(k^i)!}\binom{m_w(k^i)}{k^i_{w,1}}\binom{m_w(k^i)-k^i_{w,1}}{k^i_{w,2}}\ldots\binom{m_w(k^i)-\sum_{r=1}^{l_i-1}k^i_{w,r}}{k^i_{w,l_i}} = \prod^{l_i}_{j = 1}\frac{1}{k^i_{w,j}!}.
\end{equation*}
\end{proof}
Comparing the degeneration formula as rewritten in the Lemma with the MPS formula \eqref{mps}, we see that proving Theorem \ref{mpsdeg} is equivalent to establishing the identity
\begin{equation}\label{relative}
\chi(M^{\Theta_l-\rm st}_{(k^1,k^2)}(\mathcal{N})) = N^{\rm rel}[({\bf w}(k^1) , {\bf w}(k^2))]\prod^2_{i = 1}\prod^{l_i	}_{j=1}\prod_{w}w^{k^i_{w,j}}.
\end{equation}
In fact the right hand side of \eqref{relative} has a geometric interpretation as a suitable \emph{tropical count}. Here we will confine ourselves to the basic notions we need to state this equivalence, following \cite{gps} Section 2.1. 

Let $\overline{\Gamma}$ be a weighted, connected tree with only $1$-valent and $3$-valent vertices, thought of as a compact topological space in the canonical way. We remove the $1$-valent vertices to form the graph $\Gamma$. The noncompact edges are called \emph{unbounded} edges. We denote the induced weight function on the edges of $\Gamma$ by $w_{\Gamma}$. A \emph{parametrized rational tropical curve} in $\R^2$ is a proper map $h\!: \Gamma\to\R^2$ such that:
\begin{enumerate}
\item[$\bullet$] the restriction of $h$ to an edge is an embedding whose image is contained in an affine line of rational slope, and\\
\item[$\bullet$] a \emph{balancing condition} holds at the vertices. Namely, denoting by $m_i$ the primitive integral vector emanating from the image of a vertex $h(V)$ in the direction of an edge $h(E_i)$, we require 
\begin{equation*}
\sum^3_{i = 1} w_{\Gamma}(E_i) m_i = 0,
\end{equation*}  
where we are summing over all the edges which are adjecent to $V$. 
\end{enumerate}
A \emph{rational tropical curve} is the equivalence class of a rational parametrized tropical curve under reparametrizations which respect $w_{\Gamma}$. The \emph{multiplicity} at a vertex $V$ is defined as
\begin{equation*}
\mult_V(h) = w_{\Gamma}(E_1)w_{\Gamma}(E_2)|m_1\wedge m_2|,
\end{equation*}   
where by the balancing condition we can choose $E_1, E_2$ to be any two  edges adjecent to $V$. To total multiplicity of $h$ is then defined as 
\begin{equation*}
\mult(h) = \prod_V\mult_V(h).
\end{equation*}
Let us write $e_1, e_2$ for the versors of $\R^2$. A pair of weight vectors $({\bf w}_1, {\bf w}_2)$ encodes a tropical invariant, counting rational tropical curves $h$ which satisfy the following conditions:
\begin{enumerate}
\item[$\bullet$] the unbounded edges of $\Gamma$ are $E_{ij}$ for $1 \leq i \leq 2, 1 \leq j \leq t_i$, plus a single ``outgoing" edge $E_{\rm out}$. We require that $h(E_{ij})$ is contained in a line $e_{ij} + \R e_i$ for some \emph{prescribed} versors $e_{ij}$, and its unbounded direction is $-e_i$,\\
\item[$\bullet$] $w_{\Gamma}(E_{ij}) = w_{ij}$.
\end{enumerate}
Notice that the balancing condition implies that $h(E_{\rm out})$ lies on an affine line with direction $(|{\bf w}_1|, |{\bf w}_2|)$. The set of such tropical curves $h$ is finite, and it follows from the general theory (see e.g. \cite{gm}, \cite{mik}) that when we count curves $h$ taking into account the multiplicity $\mult(h)$ we get an integer $N^{\rm trop}[({\bf w}_1, {\bf w}_2)]$ which is \emph{independent} on the (generic) choice of displacements $e_{ij}$. The comparison result that we need, relating the Gromov-Witten invariants which appear in the degeneration formula to tropical counts, is then obtained by combining Theorems 3.4 and 4.4 in \cite{gps}:
\begin{equation}\label{equiv}
N^{\rm trop}[({\bf w}_1, {\bf w}_2)] = N^{\rm rel}[({\bf w}_1, {\bf w}_2)]\prod^2_{i = 1}\prod^{t_i	}_{j=1}w_{ij}.
\end{equation}
Thanks to the equivalence \eqref{equiv}, Theorem \ref{mpsdeg} follows from the following result:
\begin{pro}\label{eulgw} We have an equality of Euler characteristics and tropical counts
\begin{equation}\label{trop2chi}
N^{\rm trop}[({\bf w}(k^1) , {\bf w}(k^2))] = \chi(M^{\Theta_l-\rm st}_{(k^1,k^2)}(\mathcal{N})).
\end{equation}
\end{pro}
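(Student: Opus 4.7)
The plan is to reduce both sides of the identity to combinatorial counts by exploiting the special structure of $\mathcal{N}$, and then match them.

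On the quiver side, since $(k^1,k^2)$ is of type one, torus localization applies: the natural torus $T$ acting on $R_{(k^1,k^2)}(\mathcal{N})$ by rescaling each linear map has isolated fixed points on $M^{\Theta_l-\rm st}_{(k^1,k^2)}(\mathcal{N})$, and hence $\chi(M^{\Theta_l-\rm st}_{(k^1,k^2)}(\mathcal{N}))$ equals the number of stable $T$-fixed representations. Such a representation is supported on a subtree $\mathcal{T}$ of $\mathcal{N}$ in which each included edge carries a choice of exactly one of the $w\cdot w'$ parallel arrows between $i_{(w,m)}$ and $j_{(w',m')}$, subject to the $\Theta_l$-slope inequalities. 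To each such tree $\mathcal{T}$ one attaches a parametrized rational tropical curve $h_{\mathcal{T}}\colon\Gamma\to\R^2$: the $1$-valent vertices of $\mathcal{T}$, being the sources $i_{(w,m)}$ and sinks $j_{(w',m')}$, become unbounded edges of weights $w$ resp.\ $w'$ in directions $-e_1$ resp.\ $-e_2$ placed at the prescribed generic displacements, while the directions and weights of the internal edges are forced recursively by the balancing condition; $\Theta_l$-stability corresponds to genericity of the displacements.

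The main obstacle is the multiplicity bookkeeping. As stressed in the introduction, a naive bijection ``number of trees mapping to $h$'' $=\mult(h)$ fails since already the numbers of underlying trees and tropical curves differ. The equality must therefore be established in aggregate: the $ww'$-fold choices of parallel arrows in $\mathcal{N}$ at each trivalent vertex, together with the admissible branchings there, should combine to reproduce the local tropical multiplicity $\mult_V(h)=w_\Gamma(E_1)w_\Gamma(E_2)|m_1\wedge m_2|$, which multiplies over the vertices of $\Gamma$ and sums over tropical curves to yield $N^{\rm trop}[({\bf w}(k^1),{\bf w}(k^2))]$. A cleaner, more indirect route that bypasses any explicit global bijection is to extract both sides from the same wall-crossing factorization in the tropical vertex group $\mathbb{V}_R$, with initial data encoding the weights $(k^1,k^2)$: on the tropical side this is \cite{gps}, and on the quiver side it follows by extending the refined GW/Kronecker correspondence of \cite{rw} to the finite weighted complete bipartite sub-quiver of $\mathcal{N}$ supported on the nonzero entries of $(k^1,k^2)$. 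Comparing the two extractions of the same coefficient then yields the identity.
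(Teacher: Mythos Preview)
Your ``indirect route'' is exactly the paper's approach: both sides are read off as the same coefficient in an ordered product factorization in the tropical vertex group. Concretely, the paper takes the subquiver $Q\subset\mathcal{N}$ supported on $(k^1,k^2)$, uses \cite[Theorem~2.1]{poiss} (rather than an extension of \cite{rw} per se) to identify the slope-$\mu$ factor with a generating series of Euler characteristics, then rewrites the Kontsevich--Soibelman operators as standard elements $\theta_{(1,0),(1+(\xi x)^w)^w}$, $\theta_{(0,1),(1+(\eta y)^{w'})^{w'}}$ of $\mathbb{V}_{R'}$ via $x_{j_{(w,m)}}=(\xi_{j_{(w,m)}}x)^w$, $y_{i_{(w',m')}}=(\eta_{i_{(w',m')}}y)^{w'}$; after a truncation of the coefficient ring (so that only the square-free monomial survives) \cite[Theorem~2.4]{gps} identifies the relevant coefficient with $N^{\rm trop}[(\mathbf{w}(k^1),\mathbf{w}(k^2))]$, and comparison gives \eqref{trop2chi}. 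Your first paragraph (localization to trees and a would-be bijection to tropical curves) is, as you yourself note, only heuristic here; the paper does not attempt it for the proof of this proposition and instead develops that viewpoint separately in Sections~\ref{trees}--\ref{loctrop}.
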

We will prove this equality using the scattering diagrams of \cite{gps} and Theorem 2.1 in \cite{poiss}. Notice however that in the special case when all the parts of the refinement $(k^1,k^2)$ equal $1$ the corresponding subquiver of $\mathcal{N}$ is isomorphic to $K(l_1, l_2)$, and \eqref{trop2chi} is an immediate consequence of the refined GW/Kronecker correspondence.

Let us we denote by $Q \subset \mathcal{N}$ the subquiver spanned by the support of (the dimension vector induced by) $(k^1, k^2)$. This is a complete bipartite quiver with $t_1$ sources and $t_2$ sinks. For each $w$, $Q$ contains $m_w(k^1)$ sources (respectively $m_w(k^2)$ sinks) with level $w$. We introduce the ring
\begin{equation}
R = \C[[x_{j_{(w',m')}}, y_{i_{(w,m)}} \mid w,w',m,m'\in\Nn]]   
\end{equation}  
with a Poisson bracket defined by
\begin{equation*}
\{x_{j_{(w',m')}}, y_{i_{(w,m)}}\} = \{j_{(w',m')}, i_{(w,m)}\}\,x_{j_{(w',m')}}\cdot y_{i_{(w,m)}} = w w'x_{j_{(w',m')}}\cdot y_{i_{(w,m)}}.
\end{equation*}
The Kontsevich-Soibelman Poisson automorphisms in this context are defined by
\begin{align*}
T_{j_{(w, m)}}(x_{j_{(w',m')}}) &= x_{j_{(w',m')}},\\
T_{j_{(w, m)}}(y_{i_{(w',m')}}) &= y_{i_{(w',m')}}\left(1 + x_{j_{(w, m)}}\right)^{w w'},
\end{align*}
and similarly
\begin{align*}
T_{i_{(w, m)}}(x_{j_{(w',m')}}) &= x_{j_{(w',m')}}\left(1 + y_{i_{(w, m)}}\right)^{-w w'},\\
T_{i_{(w, m)}}(y_{i_{(w',m')}}) &= y_{i_{(w',m')}}.
\end{align*}
According to Theorem 2.1 in \cite{poiss}, the product of operators
\begin{equation*}
\prod_{j_{(w, m)}\in Q_0} T_{j_{(w, m)}} \cdot \prod_{i_{(w',m')}\in Q_0} T_{i_{(w',m')}} 
\end{equation*}
can be expressed alternatively as a slope-ordered product $\prod^{\leftarrow}_{\mu\in\Q}T_{\mu}$, acting e.g. on the $y$ variables as
\begin{equation*}
T_{\mu}(y_{i_{(w,m)}}) = y_{i_{(w,m)}}\prod_{j_{(w',m')}\in Q_0} (Q_{\mu, j_{(w',m')}})^{w w'},
\end{equation*}
where we have denoted by $Q_{\mu, j_{(w',m')}}$ the generating series of Euler characteristics for moduli spaces of stable representations of $Q$ with slope $\mu$ and a $1$-dimensional framing at $j_{(w',m')}$. Recall however that we are only interested in the Euler characteristic $\chi(M^{\Theta_l-\rm st}_{(k^1,k^2)}(\mathcal{N}))$, i.e. for representations with dimension $1$ at each vertex. In this case framed representations coincide with ordinary representations, and we find that the coefficient of the monomial \begin{equation}\label{top}
\prod_{j_{(w',m')}\in Q_0} x_{j_{(w',m')}}\cdot\prod_{i_{(w,m)}\in Q_0}y_{i_{(w,m)}}
\end{equation} 
in the series $y^{-1}_{i_{(w,m)}}T_{\mu}(y_{i_{(w,m)}})$ is given by 
\begin{equation}\label{chiCoeff}
w \sum_{w'} w' m_{w'}(k^2)\,\chi(M^{\Theta_l-\rm st}_{(k^1,k^2)}(\mathcal{N}))
\end{equation}  
(where we choose $\mu$ to be the slope of the dimension vector induced by $(k^1, k^2)$). On the other hand we can compute the coefficient of \eqref{top} in a different way, by setting up an appropriate \emph{scattering diagram} in the sense of \cite{gps} Definition 1.2. To this end we need to identify $T_{i_{(w, m)}}, T_{j_{(w',m')}}$ with operators acting on the ring
\begin{equation*}
R' = \C[x^{\pm 1}, y^{\pm 1}][[\xi_{j_{(w,m)}}, \eta_{i_{(w',m')}} \mid w,w',m,m'\in\Nn]].
\end{equation*}
This is possible if we set
\begin{align}\label{identifications}
\nonumber x_{j_{(w, m)}}  &= \left(\xi_{j_{(w, m)}} x\right)^{w},\\
y_{i_{(w',m')}} &= \left(\eta_{i_{(w',m')}} y\right)^{w'},
\end{align}
from which 
\begin{align*}
T_{j_{(w, m)}}(\xi_{j_{(w', m')}} x) &= \xi_{j_{(w', m')}} x,\\
T_{j_{(w, m)}}(\eta_{i_{(w',m')}} y) &= \eta_{i_{(w',m')}} y\left(1 + \left(\xi_{j_{(w, m)}} x\right)^{w}\right)^{w},
\end{align*}
respectively
\begin{align*}
T_{i_{(w, m)}}(\xi_{j_{(w', m')}} x) &= \xi_{j_{(w', m')}} x\left(1 + \left(\eta_{i_{(w,m)}} y\right)^{w}\right)^{-w},\\
T_{i_{(w, m)}}(\eta_{i_{(w',m')}} y) &= \eta_{i_{(w',m')}} y.
\end{align*}
Then following the notation of \cite{gps} Section 0.1, we can make the identification 
\begin{equation*}
T_{j_{(w, m)}} = \theta_{(1, 0), \left(1 + \left(\xi_{j_{(w, m)}} x\right)^{w}\right)^{w}}
\end{equation*}
with a standard element of the tropical vertex group over $R'$, $\mathbb{V}_{R'}$,  and similarly
\begin{equation*}
T_{i_{(w, m)}} = \theta_{(0, 1), \left(1 + \left(\eta_{i_{(w,m)}} y\right)^{w}\right)^{w}}.
\end{equation*}
We are led to consider the saturated scattering diagram $\mathcal{S}$ (in the sense of \cite{gps} Section 1) for the product
 \begin{equation}\label{scatter1}
\prod_{j_{(w, m)}\in Q_0} \theta_{(1, 0), (1 + (\xi_{j_{(w, m)}} x)^{w})^{w}} \cdot \prod_{i_{(w',m')}\in Q_0} \theta_{(0, 1), (1 + (\eta_{i_{(w', m')}} y)^{w'})^{w'}}. 
\end{equation}
We only recall briefly that according to the general theory one starts with a generic configuration of horizontal lines $\mathfrak{d}_{j_{(w, m)}}$ in $\R^2$ (respectively vertical lines $\mathfrak{d}_{i_{(w', m')}}$), with attached weight functions $(1 + (\xi_{j_{(w, m)}} x)^{w})^{w}$ ($(1 + (\eta_{i_{(w', m')}} y)^{w'})^{w'}$ respectively). According to \cite{gps} Section 1.2 with a generic path $\gamma\!:[0, 1] \to \R^2$ one can associate an element $\theta_{\gamma} \in \mathbb{V}_{R'}$. The (essentially unique) saturated scattering diagram $\mathcal{S}$ is obtained by adding rays to the original configurations of lines to that for each closed loop the group element $\theta_{\gamma}$ becomes trivial (if at all defined), see \cite{gps} Theorem 1.4. The crucial point for us is that, thanks to the identifications \eqref{identifications}, $\mathcal{S}$ gives an alternative way of computing the ordered product factorization $\prod^{\leftarrow}_{\mu\in\Q}T_{\mu}$.

In fact since we are only interested in the coefficient of the monomial \eqref{top}, we are allowed to replace the ring $R'$ with its truncation
\begin{equation*}
\C[x^{\pm 1}, y^{\pm 1}][[\xi_{j_{(w,m)}}, \eta_{i_{(w',m')}} \mid w,w',m,m'\in\Nn]]/(\xi^{2w}_{j_{(w,m)}}, \eta^{2w'}_{i_{(w',m')}}),
\end{equation*}
and thus replace the scattering diagram for \eqref{scatter1} with the much simpler scattering diagram for the product
\begin{equation*}
\prod_{j_{(w, m)}\in Q_0} \theta_{(1, 0), 1 + w (\xi_{j_{(w, m)}} x)^{w}} \cdot \prod_{i_{(w',m')}\in Q_0} \theta_{(0, 1), 1 + w' (\eta_{i_{(w',m')}} y)^{w'}}.
\end{equation*}
Making the change of variables
\begin{equation*}
u_{j_{(w, m)}} = \xi^{w}_{j_{(w,m)}},\,\,\,v_{i_{(w', m')}} = \eta^{w'}_{i_{(w', m')}}
\end{equation*} 
we can as well consider the scattering diagram $\widetilde{\mathcal{S}}$ for the product
\begin{equation*}
\prod_{j_{(w, m)}\in Q_0} \theta_{(1, 0), 1 + w\,u_{j_{(w, m)}} x^{w}} \cdot \prod_{i_{(w',m')}\in Q_0} \theta_{(0, 1), 1 + w' v_{i_{(w',m')}} y^{w'}}.
\end{equation*}
over the ring 
\begin{equation*}
\widetilde{R'} = \C[x^{\pm 1}, y^{\pm 1}][[u_{j_{(w,m)}}, v_{i_{(w',m')}} \mid w,w',m,m'\in\Nn]]/(u^2_{j_{(w,m)}}, v^2_{i_{(w',m')}}).
\end{equation*}
According to \cite{gps} Theorem 2.4, there is a one to one correspondence between rays of $\widetilde{\mathcal{S}}$ and rational tropical curves for which the set $E_{ij}$ is contained in $\mathfrak{d}_{j_{(w,m)}}, \mathfrak{d}_{i_{(w', m')}}$ (so that the weight of a leg contained in $\mathfrak{d}_{j_{(w,m)}}$ is $w$, respectively $w'$ for $\mathfrak{d}_{i_{(w', m')}}$). What is more, if $f$ is a weight function containing the monomial \eqref{top}, it must have the form
\begin{equation*}
f = 1 + \mult(h) \prod_{j_{(w',m')}\in Q_0} u_{j_{(w',m')}} x^{w'} \cdot\prod_{i_{(w,m)}\in Q_0} v_{i_{(w,m)}} y^w 
\end{equation*}
where $h$ is the corresponding tropical curve. This is again a consequence of \cite{gps} Theorem 2.4. Indeed taking up for a moment the notation of \cite{gps} equation (2.1), in our case we have $w_{\rm out} = 1$ and the term $a_{i(\#J) q} \prod_{j \in J} u_{ij}$ vanishes except when $J = \{1\}$ and $q$ is one of our $w, w'$, so 
\begin{equation*}
\prod_{i, J, q} \left((\#J)! a_{i(\#J) q} \prod_{j \in J} u_{ij}\right) z^{m_{\rm out}} = \prod_{j_{(w',m')}\in Q_0} u_{j_{(w',m')}} x^{w'} \cdot\prod_{i_{(w,m)}\in Q_0} v_{i_{(w,m)}} y^w. 
\end{equation*}
Notice that the weight vector of $h$ is $({\bf w}(k^1), {\bf w}(k^2))$. Therefore the product of all such weight functions $f$ equals
\begin{equation*}
\Phi = 1 + N^{\rm trop}[({\bf w}(k^1), {\bf w}(k^2))] \prod_{j_{(w',m')}\in Q_0} u_{j_{(w',m')}} x^{w'} \cdot\prod_{i_{(w,m)}\in Q_0} v_{i_{(w,m)}} y^w 
\end{equation*}
Thanks to the choice of level structure on $\mathcal{N}$ (i.e. $l(i_{(w, m)}) = w, l(j_{(w', m')}) = w'$), the slope of each ray underlying one of the weight functions $f$ equals $\mu$, the slope of the dimension vector induced by $(k^1, k^2)$. By the uniqueness of ordered product factorizations in $\mathbb{V}_{\widetilde{R}'}$, we have that the coefficient of the monomial \eqref{top} in the series $y^{-1}_{i_{(w,m)}}T_{\mu}(y_{i_{(w,m)}})$ equals the nontrivial coefficient of the action of $\theta_{\Phi}$ on $v_{i_{(w, m)}} y^w$. Namely we have
\begin{equation*}
\theta_{\Phi}(v_{i_{(w, m)}} y^w) = \theta_{\Phi}(\eta^w_{i_{(w, m)}} y^w) = \eta^w_{i_{(w, m)}} y^w \cdot \Phi^{w \sum_{w'} w' m_{w'}(k^2)}, 
\end{equation*}   
which when expanded contains as the only nontrivial coefficient
\begin{equation}\label{tropCoeff}
w \sum_{w'} w' m_{w'}(k^2) N^{\rm trop}[({\bf w}(k^1), {\bf w}(k^2))].
\end{equation}
Our claim \eqref{trop2chi} follows by comparing \eqref{chiCoeff}, \eqref{tropCoeff}.\\ 
\textbf{Remark.} One can show (arguing by induction on $|p_{ij}|$) that the MPS formula and the coprime case of the refined GW/Kronecker correspondence imply the equality \eqref{trop2chi}.
\section{Euler characteristic via counting trees}\label{trees}
In this section we continue the investigation of the MPS formula $(\ref{mpsfor})$. Combined with localization techniques it implies that to calculate the Euler characteristic of moduli spaces it suffices to count trees. For a quiver $Q$ we denote by $\tilde{Q}$ its universal cover given by the vertex set 
\[\tilde{Q}_0=\{(q,w)\mid q\in Q_0,w\in W(Q)\}\]
and the arrow set
\[\tilde{Q}_1=\{\alpha_{(q,w)}:(q,w)\rightarrow (q',w\alpha)\mid\alpha:q\rightarrow q'\in Q_1\}.\]
Here $W(Q)$ denotes the set of words of $Q$, see \cite[Section 3.4]{wei} for a precise definition. Recall the localization theorem \cite[Corollary 3.14]{wei}:
\begin{thm} We have
\[\chi(M^{\Theta-\rm{st}}_d(Q))=\sum_{\tilde{d}}\chi(M^{\tilde{\Theta}-\rm{st}}_{\tilde{d}}(\tilde{Q})),\]
where $\tilde{d}$ ranges over all equivalence classes being compatible with $d$, and the slope function considered on $\tilde{Q}$ is the one induced by the slope function fixed on $Q$, i.e. we define the corresponding linear form $\tilde{\Theta}$ by $\tilde{\Theta}_{q'}=\Theta_q$ for all $q\in Q_0$ and for all $q'\in\tilde{Q}_0$ corresponding to $q$.
\end{thm}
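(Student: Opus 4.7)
The plan is to deduce this identity by Białynicki-Birula–type torus localization for Euler characteristics. I would introduce the torus $T = (\C^*)^{Q_1}$ acting on the representation space $R_d(Q)=\bigoplus_{\alpha:i\to j}\mathrm{Hom}(\C^{d_i},\C^{d_j})$ by scaling each arrow independently, $(t_\alpha)\cdot (X_\alpha)=(t_\alpha X_\alpha)$. This action commutes with the base change group $G_d$ and preserves both the semistable locus and the slope $\mu$, hence it descends to an action on the smooth variety $M^{\Theta-\mathrm{st}}_d(Q)$. Since the Euler characteristic of a complex variety with algebraic torus action equals that of its fixed locus, one is reduced to computing
\[
\chi(M^{\Theta-\mathrm{st}}_d(Q)) = \chi\bigl((M^{\Theta-\mathrm{st}}_d(Q))^T\bigr).
\]

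The next step is to describe the $T$-fixed locus. A stable representation $X$ defines a $T$-fixed point exactly when $X\simeq t\cdot X$ for all $t\in T$; since $\mathrm{Aut}(X)=\C^*$ by Schur's lemma, a standard rigidity argument using the connectedness of $T$ lifts this to a genuine $T$-equivariant structure on $X$. Such a structure is the same as a $\Z^{Q_1}$-grading $X_q=\bigoplus_{w} X_{q,w}$ in which each $X_\alpha$ increases the weight by the basis vector dual to $\alpha$. Unravelling the definitions, a $T$-equivariant $Q$-representation $X$ is precisely a representation $\tilde{X}$ of the universal cover $\tilde{Q}$ with dimension vector $\tilde{d}$ of finite support satisfying $\sum_{w\in W(Q)}\tilde{d}_{(q,w)}=d_q$ for every $q\in Q_0$, which is exactly the compatibility condition in the statement.

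Two equivariant structures on the same $X$ differ by a global character of $T$, and the corresponding $\tilde{d}$ then differ by a translation of $\tilde{Q}_0$; this is the equivalence relation used to index the sum. Because $\tilde{\Theta}$ is by definition pulled back from $\Theta$, any subrepresentation of the underlying $Q$-representation $X$ that destabilizes it may be replaced by its homogeneous part without changing its slope, and conversely any graded subrepresentation of $\tilde{X}$ descends to an honest subrepresentation of $X$. Therefore $\tilde{\Theta}$-stability on $\tilde{Q}$ is equivalent to $\Theta$-stability on $Q$, and each connected component of the fixed locus is identified with $M^{\tilde{\Theta}-\mathrm{st}}_{\tilde{d}}(\tilde{Q})$ for a unique equivalence class $\tilde{d}$. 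Summing Euler characteristics over the components yields the desired identity.

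The main technical obstacle is the passage from a set-theoretic isomorphism $X\simeq t\cdot X$ for each $t\in T$ to the existence of an algebraic $T$-equivariant structure: one must check that the cocycle $T\to\mathrm{Aut}(X)=\C^*$ obtained pointwise can be rigidified to an algebraic character, which is exactly where stability (via Schur's lemma and vanishing of the relevant $H^1$) is needed. Once this is in place, the remaining identifications — matching the grading with $\tilde{Q}$-representations and with the equivalence classes in the sense of \cite{wei} — are essentially combinatorial bookkeeping.
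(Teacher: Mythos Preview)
This theorem is not proved in the present paper; it is quoted as \cite[Corollary~3.14]{wei}. Your overall strategy---localize with respect to the arrow-scaling torus $T=(\C^*)^{Q_1}$ and identify the fixed locus with moduli on a covering quiver---is indeed the framework of \cite{wei}, and your first two steps (descent of the $T$-action to the smooth moduli space, and the use of Schur's lemma to promote the pointwise isomorphisms $t\cdot X\cong X$ to a genuine $T$-equivariant structure) are correct.

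The gap is in the identification step, and it is not the one you flag. A $T$-equivariant structure on $X$ is a grading by the character lattice $\Z^{Q_1}$ in which $X_\alpha$ has degree $e_\alpha$; this is the same as a representation of the Galois cover of $Q$ with group $\Z^{Q_1}$, whose connected components are copies of the maximal \emph{abelian} cover, not the universal cover $\tilde Q$. These coincide only when the fundamental group of the underlying graph is abelian, i.e.\ when $Q$ has at most one independent cycle. In general the abelian cover still contains cycles and stable $T$-fixed points can be supported on them: for the three-arrow Kronecker quiver with dimension vector $(3,4)$ one checks that the type-one representation supported on a hexagon of the abelian cover together with one pendant sink is $\Theta$-stable, yet it does not arise from any finite-dimensional $\tilde Q$-representation. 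So the passage from the $T$-fixed locus (naturally indexed by data on the abelian cover) to the sum over $\tilde Q$-dimension vectors is not the ``combinatorial bookkeeping'' you suggest; it is precisely here that the argument of \cite{wei} does additional work.
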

We call a tuple $(\mathcal{Q},d)$ consisting of a finite subquiver $\mathcal{Q}$ of $\tilde{Q}$ and a dimension vector $d\in\Nn\mathcal{Q}_0$ localization data if $M^{\tilde{\Theta}-\rm{st}}_{d}(\mathcal{Q})\neq\emptyset$.

Even if the following machinery applies in a more general setting, we  concentrate on the quivers $K(l_1,l_2)$ and $\mathcal N$ respectively. We fix a partition $({\bf P}_1,{\bf P}_2)$ with $|{\bf P}_1|=d,\,|{\bf P}_2|=e$ and a refinement $(k^1,k^2)$. We consider the quiver $\mathcal{N}(k^1,k^2):=\mathrm{supp}(k^1,k^2)$ consisting of the full subquiver of $\mathcal{N}$ with vertices $\mathrm{supp}(k^1,k^2)_0=\{q\in\mathcal{N}_0\mid (k^1,k^2)_q\neq 0\}$. 
Since we have $(k^1,k^2)_q=1$ for all $q\in \mathcal{N}(k^1,k^2)_0$, every localization data $(T,(k^1,k^2))$ defines a subtree $T$ of $\mathcal{N}(k^1,k^2)$, i.e. a subquiver without cycles with at most one arrow between each two vertices.

In the following, fixing a subtree $T$ we denote by
$\beta$ the dimension vector and, moreover, we denote by $(d,e)$ the dimension type, i.e.
\[d:=\sum_{i\in T(I)}\beta_il(i)\text{ and }e:=\sum_{j\in T(J)}\beta_jl(j)\]
\begin{rem} \emph{Since for the dimension vectors $\beta\in\N T _0$ we are mostly interested in we have $\beta_q=1$ for all $q\in T_0$ and since $T$ is a tree, there exists only one stable representation $X$ up to isomorphism. In particular, we can assume that $X_{\alpha}=1$ for all $\alpha\in T_1$. Moreover, we have $M_{\beta}^{\tilde{\Theta}_l-\rm{st}}(T)=\{\rm pt\}$.}
\end{rem}

\noindent Since, in general, every connected tree with $n$ vertices has $n-1$ edges, every connected subtree of $\mathcal{N}(k^1,k^2)$ has $\sum_{i=1}^2\sum_{w}\sum_{j=1}^{l_i}k_{w,j}^i-1$ arrows.  Let $T(k^1,k^2)$ be the set of connected subtrees of $\mathcal{N}(k^1,k^2)$. For a tree $T\in T(k^1,k^2)$ and a subset $I'\subsetneq T(I)$ we define $\sigma_{I'}(T)=\sum_{j\in N_{I'}}l(j)$. Then $(T,\beta)$ with $\beta_q=1$ for all $q\in T_0$ is a localization data if and only if $\sigma_{I'}(T)>\frac{e}{d}|I'|$ for all $\emptyset\neq I'\subsetneq T(I)$ where $|I'|:=\sum_{i\in I'}l(i)$. Moreover since we have $\chi(M_{\beta}^{\tilde{\Theta}_l-\rm{st}}(T))=1$ it suffices to count such subtrees in order to calculate the Euler characteristic. More precisely defining
\[w(T)= \left\{
\begin{matrix}
1\textrm{ if } \sigma_{I'}(T)>\frac{e}{d}|I'|\text{ for all }\emptyset\neq I'\subsetneq T(I)\\
0\textrm{ otherwise} 
\end{matrix}\right\}\]
we obtain the following:
\begin{kor}\label{treeSum}
We have
$\chi(M^{\Theta_l-\rm{st}}_{(k^1,k^2)}(\mathcal{N}))=\sum_{T\in T(k^1,k^2)}w(T).$
\end{kor}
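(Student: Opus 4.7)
The plan is to combine the localization theorem of \cite{wei} recalled above with the Remark preceding the statement. By that theorem
\[\chi(M^{\Theta_l-\rm st}_{(k^1,k^2)}(\mathcal{N}))=\sum_{\tilde d}\chi(M^{\tilde\Theta_l-\rm st}_{\tilde d}(\tilde{\mathcal{N}})),\]
the sum running over equivalence classes of compatible dimension vectors on the universal cover $\tilde{\mathcal{N}}$. Since $(k^1,k^2)_q\in\{0,1\}$ for every vertex $q$, any compatible $\tilde d$ likewise takes values in $\{0,1\}$ and amounts to the choice of one lift $(q,w_q)\in\tilde{\mathcal{N}}_0$ for each $q\in\mathcal{N}(k^1,k^2)_0$.

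Next I would show that the equivalence classes which contribute nontrivially are in bijection with $T(k^1,k^2)$. The full subquiver $\mathcal{Q}(\tilde d)\subset\tilde{\mathcal{N}}$ on the chosen lifts is a subgraph of the tree $\tilde{\mathcal{N}}$, hence a forest, and projects isomorphically onto a spanning subforest $T$ of $\mathcal{N}(k^1,k^2)$; conversely, any connected spanning subtree of $\mathcal{N}(k^1,k^2)$ lifts uniquely up to a global deck transformation, and its lift cannot acquire extra arrows without producing a cycle in the tree $\tilde{\mathcal{N}}$. When instead $\mathcal{Q}(\tilde d)$ is disconnected, every representation with dimension vector $\tilde d$ splits as $X=\bigoplus_C X_C$ indexed by the connected components $C$; coprimality of $d=|{\bf P}_1|$ and $e=|{\bf P}_2|$ implies that no proper non-zero sub-dimension vector of $\tilde d$ shares the slope of $\tilde d$, so no such decomposable representation is semistable and the corresponding Euler characteristic vanishes.

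It remains to evaluate $\chi(M^{\tilde\Theta_l-\rm st}_{\tilde d}(\tilde{\mathcal{N}}))$ when $T$ is a connected spanning subtree. By the Remark there is a unique representation (up to isomorphism) with dimension vector $\beta=(1,\ldots,1)$ and every arrow of $T$ acting as $1$, so the moduli space is $\{\mathrm{pt}\}$ or empty according as this representation is stable or not. Since $T$ is bipartite, sub-dimension vectors of $\beta$ correspond to pairs $(I',J')$ with $I'\subseteq T(I)$, $J'\subseteq T(J)$ and $N_{I'}\subseteq J'$; unwinding $\mu(\beta')<\mu(\beta)=d/(d+e)$ yields $|J'|>(e/d)|I'|$, and the tightest case occurs for $J'=N_{I'}$ with $\emptyset\neq I'\subsetneq T(I)$. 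This recovers exactly $w(T)=1$, and summing over $T\in T(k^1,k^2)$ gives the claimed identity. The delicate step is the bijection between contributing equivalence classes and elements of $T(k^1,k^2)$, together with the vanishing of the Euler characteristic on disconnected projected subforests, both of which rest on the coprimality assumption on $(|{\bf P}_1|,|{\bf P}_2|)$.
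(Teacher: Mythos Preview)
Your proposal is correct and follows essentially the same approach as the paper. The paper presents the statement as an immediate corollary of the localization theorem together with the preceding Remark, leaving the bijection between contributing equivalence classes and connected spanning subtrees, the vanishing for disconnected supports, and the translation of stability into the condition $\sigma_{I'}(T)>\frac{e}{d}|I'|$ implicit in the surrounding discussion; you have simply made these steps explicit.
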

Notice that fixing a tree $T\in T(k^1,k^2)$ with $w(T)=1$ we can also forget about the colouring (but fix the level structure), and ask for the number of different embeddings into $\mathcal{N}(k^1,k^2)$. 

\section{A connection between localization data and tropical curves}\label{loctrop}
In this section we connect a recursive construction of tropical curves to a similar construction of localization data. This gives a possible recipe to obtain a direct correspondence between rational tropical curves and quiver localization data, as suggested by the equality of the respective counts Proposition \ref{eulgw}. We work out this correspondence in some examples in the following sections. In every case this construction can be used to compute the number of tropical curves and, therefore, the Euler characteristic of the corresponding moduli spaces recursively. 

\subsection{Recursive construction of curves}
The aim of this section is to construct tropical curves recursively. Therefore we first show that by choosing the lines $h(E_{ij})$ in a suitable  (but generic) way we can remove the last edge $E_{2,t_2}$, effectively decomposing one of our tropical curves into smaller ones. Moreover this construction works in the other direction as well. In particular we show that every tropical curve is obtained by glueing smaller ones. 

In the following we denote the coordinates of a vector $x\in\R^2$ by $(x^1,x^2)$. Let $h:\Gamma\rightarrow\R^2$ be a connected parametrized rational tropical curve with unbounded edges $E_{ij}$ of weights $w_{ij}$ and $E_{\rm out}$ for $1\leq i\leq 2$ and $1\leq j\leq t_i$. Let $h(E_{ij})$ be contained in the line $e_{ij}+\R e_i$. For every unbounded edge $E$ there exists a unique vertex $V$ such that $E$ is adjacent to $V$. We denote this vertex by $V(E)$. For every compact edge $E$ there exist two vertices $V_1(E)$ and $V_2(E)$ which are adjacent to $E$ where we assume that $h(V_2(E))^1>h(V_1(E))^1$. For a fixed tropical curve  $h:\Gamma\rightarrow\R^2$ we have $h(V_1(E))=h(V_2(E))+\lambda w_{\Gamma}(E) m$ for some $\lambda\in\R$ where $m$ denotes the primitive vector emanating from $h(V_1(E))$ in the direction of $h(V_2(E))$. The slope $\frac{w_{\Gamma}(E)m^2}{w_{\Gamma}(E)m^1}$ of $h(E)$ is abbreviated to $\mu(E)$ in the following. Note that it is important that the weight of $E$ is taken into account. \\

By the methods of \cite{gm}, see also \cite[Proposition 2.7]{gps}, we have that $N^{\rm trop}[{\bf w}_1,{\bf w}_2]$ does not depend on the (general) choice of the vectors $e_{ij}$. So we always assume that $e_{2,j}^1>e_{2,j-1}^1$ and $e_{1,j}^2>e_{1,j-1}^2$.

Let $F_1,\ldots,F_n$ be the edges such that there exists a point $(x^1_i,x^2_i)\in h(F_i)$ satisfying $e^1_{2,t_2-1}<x^1_i<e^1_{2,t_2}$ and, moreover, $h(V_2(E))^1\geq e_{2,t_2}^1$. Moreover, let $h(F_i)\subset \R f_{1,i}+f_{2,i}=:f_i$ and denote by $s_{i,j}\in\R^2$, $1\leq i<j\leq n$, the intersection point of $f_i$ and $f_j$ if there exists one. Again, by the methods of \cite{gm} we may assume that $s_{i,j}^1<e^1_{2,t_2}$ (by moving  $e_{2,t_2}$). This means that all intersection points of any two affine lines $f_i$ and $f_j$ lie on the left hand side of the line $e_{2,t_2}+\R e_2$. In particular we may assume that the edges $F_i$ are slope ordered, i.e. $\mu(F_i)\leq\mu(F_j)$ for $i<j$.

In the following, we say that a tropical curve satisfying these conditions is slope ordered. Then we have the following lemma:
\begin{lem}
We can choose the lines $h(E_{ij})$ in such a way that every tropical curve is slope ordered.
\end{lem}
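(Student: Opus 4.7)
The plan is to combine the generic-independence of the tropical count (\cite{gm}, \cite[Proposition 2.7]{gps}) with a finiteness argument. First, for any generic choice of the parameters $(e_{ij})$ the set of tropical curves contributing to $N^{\rm trop}[{\bf w}_1,{\bf w}_2]$ is finite, and each such curve has only finitely many edges $F_1,\ldots,F_n$ crossing the strip $e^1_{2,t_2-1}<x^1<e^1_{2,t_2}$ and leaving it to the right. For each combinatorial type the slopes of the affine spans $f_1,\ldots,f_n$ are determined by the weights and the balancing condition, so the pairwise intersection points $s_{i,j}$ are well-defined and depend continuously on $(e_{ij})$.

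The core geometric step is that, keeping the other parameters fixed and translating $e_{2,t_2}$ to the right by a large amount $\delta$, the affine spans $f_i$ and their intersection points $s_{i,j}$ deform in a bounded way while the right boundary of the strip moves by $\delta$. For a fixed combinatorial type the tree structure of the rational tropical curve determines a unique path from $V(E_{2,t_2})$ to the outgoing direction, and only vertices on this path are displaced by the perturbation; the remaining vertices, and in particular the intersection points $s_{i,j}$, remain in a bounded region. Consequently, for $\delta$ large enough every $s_{i,j}$ lies strictly to the left of the new right boundary $\{x^1=e^1_{2,t_2}+\delta\}$. Once this holds, indexing the $F_i$ by the $x^2$-coordinate of their intersection with the right boundary of the strip recovers the slope ordering, since two affine lines of distinct slopes that cross to the left of a vertical line are ordered by slope to the right of it.

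Slope-ordering is an open condition on the finite list of parameters $(e_{ij})$, and by the previous step it is achievable on a nonempty open subset of configuration space for each combinatorial type. Since there are only finitely many combinatorial types and finitely many pairs of edges per type, the intersection of these open conditions is again a nonempty open set. Choosing $(e_{ij})$ in this intersection, generic in the sense of \cite{gm}, gives a configuration for which every tropical curve contributing to the count is simultaneously slope-ordered, without changing the value of $N^{\rm trop}[{\bf w}_1,{\bf w}_2]$.

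The main obstacle I expect is the rigidity claim of the second paragraph: that a rightward translation of $e_{2,t_2}$ displaces only the vertices on the path from $V(E_{2,t_2})$ to the outgoing edge, leaving the other $f_i$ and all their pairwise intersections in a bounded region as $e^1_{2,t_2}\to\infty$. Establishing this requires a careful bookkeeping of how the balancing equations propagate the perturbation through the tree, which is precisely where the tree structure of rational tropical curves enters essentially and where the methods of \cite{gm} are invoked.
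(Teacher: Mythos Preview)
Your approach is exactly the paper's: the entire justification given there is the sentence immediately preceding the lemma, namely that by the methods of \cite{gm} one may assume $s_{i,j}^1<e^1_{2,t_2}$ ``(by moving $e_{2,t_2}$)''. For the rigidity step you single out, the cleanest viewpoint is to root $\Gamma$ at $E_{\rm out}$: each vertex is then determined inductively by the unbounded edges in the subtree below it, so only vertices having $E_{2,t_2}$ in their subtree---precisely those on the path from $E_{2,t_2}$ to $E_{\rm out}$---move with $e_{2,t_2}$, while the affine spans $f_i$ of the edges feeding into this path from the fixed subtrees (and hence the $s_{i,j}$) are literally unchanged rather than merely bounded. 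One minor slip in your third paragraph: a finite intersection of nonempty open sets need not be nonempty in general, but since a single large translation $\delta$ handles all of the finitely many combinatorial types at once, the common region is indeed nonempty.
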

In the following we assume that we have chosen the lines in this way and we also call such an arrangement of lines slope ordered. Let $\mu(F_i)=\frac{e_i}{d_i}$ with $e_i,d_i\in\N$ and $d_i> 0$. Clearly the vertex $V(E_{2,t_2})$ must be adjacent to $F_1$ and $G_0:=E_{2,t_2}$ and induces an edge $G_1$ with $\mu(G_1)=\frac{e_1+w_{2,t_2}}{d_1}$ such that $\frac{e_1+w_{2,t_2}}{d_1}>\frac{e_2}{d_2}$. With this notation in place, the following lemma can be proved by induction:
\begin{lem}
Let $h:\Gamma\rightarrow\R^2$ be a slope ordered tropical curve. Then there exist edges $G_1,\ldots, G_n$ and vertices $V_1,\ldots, V_n$ such that $V_i$ is adjacent to $F_{k}$, $G_{k-1}$ and $G_k$ and such that
$$\mu(G_k)=\frac{w_{2,t_2}+\sum_{i=1}^ke_i}{\sum_{i=1}^kd_i}>\frac{e_{k+1}}{d_{k+1}}$$
for $k=1,\ldots,n-1$. Moreover we have $G_n=E_{\rm out}$.
\end{lem}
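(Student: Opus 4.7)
The plan is to prove the lemma by induction on $k$, constructing the vertices $V_k$ and edges $G_k$ successively while tracking their weighted directions by the balancing condition at trivalent vertices of $\Gamma$. For the base case $k=1$, the vertex $V_1 := V(E_{2,t_2})$ lies on the vertical line $x^1 = e^1_{2,t_2}$, is trivalent, and by genericity of the $e_{ij}$ together with slope ordering its other bounded neighbour must be $F_1$. The third edge at $V_1$ is by definition $G_1$, and balancing $(0,w_{2,t_2})+(d_1,e_1)$ yields the claimed weighted direction and slope. For the inductive step, $G_{k-1}$ is an edge of the tree $\Gamma$ and thus terminates at some trivalent vertex $V_k$; the slope-ordered hypothesis together with the genericity assumption that no two $f_i, f_j$ meet in the half-plane $x^1 \geq e^1_{2,t_2}$, combined with the tree structure and the fact that $F_1,\ldots,F_{k-1}$ have already been absorbed, identifies the second bounded edge at $V_k$ as $F_k$. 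Balancing at $V_k$ then yields the weighted direction of $G_k$ as $(\sum_{i\leq k}d_i,\,w_{2,t_2}+\sum_{i\leq k}e_i)$, confirming the slope formula.

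The main obstacle I foresee is establishing the strict inequality $\mu(G_k)>\mu(F_{k+1})$ needed to continue the induction. A purely algebraic manipulation of the inductive hypothesis together with $\mu(F_k)\leq\mu(F_{k+1})$ yields only $\mu(G_k)>\mu(F_k)$, which is too weak. The strict inequality must come from the geometry: if $\mu(G_k)\leq\mu(F_{k+1})$ then the affine line through $G_k$ would meet $f_{k+1}$ either to the left of $V_k$ or not at all inside the right half-plane, contradicting the fact that $G_k$ is an edge of $\Gamma$ connecting $V_k$ to a subsequent vertex $V_{k+1}$ lying on $f_{k+1}$. Making this step rigorous should require carefully invoking the genericity of the $e_{ij}$ and the slope-ordered arrangement, so that the meeting point of $G_k$ with $f_{k+1}$ produced by the tropical curve lies strictly to the right of $V_k$.

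Finally, $G_n = E_{\rm out}$ follows from a global balancing argument applied to the portion of the curve lying in the half-plane $x^1 < e^1_{2,t_2}$: the weighted directions of the unbounded edges $E_{1j}$ and $E_{2,j}$ for $j<t_2$ together with those of the $F_i$'s must sum to zero, yielding $\sum_i d_i = |{\bf w}_1|$ and $\sum_i e_i = |{\bf w}_2|-w_{2,t_2}$. Hence the weighted direction of $G_n$ computed from the slope formula is $(|{\bf w}_1|, |{\bf w}_2|)$, matching that of $E_{\rm out}$, so the chain indeed terminates at the unique remaining unbounded edge.
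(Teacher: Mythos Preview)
Your proposal is correct and follows precisely the route the paper indicates: the paper states only that the lemma ``can be proved by induction'' and supplies no details, so your inductive construction of the $V_k$ and $G_k$, with the balancing condition yielding the weighted direction of each $G_k$, is exactly what is intended. Your global balancing argument for $G_n=E_{\rm out}$ and your geometric justification of the strict inequality $\mu(G_k)>\mu(F_{k+1})$ (via the requirement that the ray carrying $G_k$ actually reach $f_{k+1}$ to the right of $V_k$) are both sound and go well beyond what the paper writes out.

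One point that could be tightened: in the inductive step you assert that the third edge at $V_k$ must be $F_k$ rather than some $F_j$ with $j>k$. The cleanest way to secure this is to observe first that the portion of $\Gamma$ lying in the half-plane $x^1\geq e^1_{2,t_2}$ is a trivalent subtree with exactly the $n+2$ external edges $F_1,\ldots,F_n,E_{2,t_2},E_{\rm out}$; its $n$ internal vertices therefore lie on a path. Since the lines $f_1,\ldots,f_n$ are pairwise disjoint in this half-plane and slope-ordered, their intersection points with the vertical line $x^1=e^1_{2,t_2}$ are vertically ordered as $p_1<p_2<\cdots<p_n$, and each successive $G_{k-1}$ (which points strictly up and to the right from $V_{k-1}$) must meet $f_k$ before any $f_j$ with $j>k$. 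This removes any residual circularity between identifying $F_k$ at $V_k$ and establishing the inequality.
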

\begin{rem} \emph{Note that for $w_{2,t_2}=1$ these conditions are part of the glueing conditions of \cite[Section 4.3]{wei}. The only missing property is the fourth one.}
\end{rem}
Fixing $(d,e)\in\mathbb{N}^2$ and $w\in\mathbb{N}$ we call a tuple $(d_i,e_i)_{i=1,\ldots,n}$ of pairs of natural numbers satisfying
\begin{eqnarray}\label{slope}w+\sum_{i=1}^ne_i=e,\,\sum_{i=1}^nd_i=d,\,\frac{e_i}{d_i}\leq\frac{e_{i+1}}{d_{i+1}}\text{ and }\frac{w+\sum_{i=1}^k e_i}{\sum_{i=1}^kd_i}>\frac{e_{k+1}}{d_{k+1}}\end{eqnarray}
a $w$-admissible decomposition of $(d,e)$. Obviously every slope ordered tropical curve defines a $w_{t_2}$-admissible decomposition of $(d,e)$.

We call a set partition $I_{\bullet}$ as introduced in Section \ref{4} proper if all parts are not empty. Fix a weight vector $({\bf w}_1,{\bf w}_2)$ with $d=\sum_{j=1}^{t_1}w_{1j}$ and $e=\sum_{j=1}^{t_2}w_{2j}$ satisfying $w_{ij}\leq w_{i(j+1)}$. Every $w_{2,t_2}$-admissible decomposition of $(d,e)$ defines two ordered partitions of $d$ and $e-w_{2,t_2}$ respectively. Then every tuple of set partitions of $\{1,\ldots,t_1\}$ and $\{1,\ldots,t_2-1\}$ respectively which is compatible with $(d_i,e_i)_{i=1,\ldots,n}$  defines $n$ tuples of weight vectors $({\bf w}(i)_1,{\bf w}(i)_2)$ with $d_i=\sum_{j}{\bf w}(i)_{1j}$ and $e_i=\sum_j{\bf w}(i)_{2j}$. 

Now let $h_i:\Gamma_i\rightarrow\R^2$, $i=1,\ldots,n$, be tropical curves with unbounded edges corresponding to the weights $({\bf w}(i)_1,{\bf w}(i)_2)$ and with outgoing edges $E_{i,{\rm out}}$. Moreover let $h(E_{2,t_2})\subseteq e_{2,t_2}+\R e_2$ be the embedding of $E_{2,t_2}$. We may assume that for all intersection points $s_{ij}$ of the affine lines $f_i=\R f_{1,i}+f_{2,i}$ containing $h(E_{i,{\rm out}})$ we have $s_{ij}^1<e_{2,t_2}^1$. We may assume that $f_{1,i}^1\geq 0$. In particular, we have $\mu(E_{i,{\rm out}})\leq\mu(E_{i+1,{\rm out}})$.

Therefore, these curves recursively define a tropical curve $h:\Gamma\rightarrow\R^2$ in the following way: $h(E_{1,{\rm out}})$ and $h(E_{2,t_2})$ have a unique intersection point $s_1$. Thus to $\Gamma$ we add a vertex $V_1$ with $h(V_1)=s_1$ and an unbounded edge $F_1$ with adjacent vertex $V_1$ setting $h(F_1)=\{h(V_1)+\R^{\geq 0}((d_1,e_1)+(0,w_{2,t_2}))\}$. Additionally we bind $E_{1,{\rm out}}$ by $V_1$ and so we modify its image in an appropriate way. In general, since $(d_i,e_i)_i$ is a $w_{t_2}$-admissible decomposition, there exists an intersection point $s_{j+1}$ of $h(F_j)$ and $h(E_{j+1,{\rm out}})$. Thus we add a vertex $V_{j+1}$ with $h(V_{j+1})=s_{j+1}$ and an unbounded edge $F_{j+1}$ with adjacent vertex $V_{j+1}$ setting $h(F_{j+1})=\{h(V_{j+1})+\R^{\geq 0}(\sum_{i=1}^{j+1}(d_i,e_i)+(0,w_{2,t_2}))\}$. As above we bind $E_{j+1,{\rm out}}$ and $F_{j}$ by $V_{j+1}$ and we again modify their images appropriately.\\

Considering all the $w_{t_2}$-admissible decompositions and all the sets of tropical curves (embedded in the chosen line arrangement) as above at once we can assume that $s_{ij}^1<e_{2,t_2}^1$ for all possible intersection points. So we have the following:
\begin{thm}\label{troprec}
\begin{eqnarray*}N^{\rm trop}({\bf w}_1,{\bf w}_2)&=&\sum_{(d_i,e_i)_i}\sum_{I_{\bullet}}\prod_{i=1}^nN^{\rm trop}[{\bf w}(i)_1,{\bf w}(i)_2]\\&&\left|\prod_{k=1}^{n}(e_{k}\sum_{i=1}^{k-1}d_i-d_{k}(\sum_{i=1}^{k-1}e_i+w_{2,t_2}))\right|\end{eqnarray*}
where we first sum over all $w_{t_2}$-admissible decompositions of $(d,e)$ and then over all proper set partitions $I_{\bullet}$ which are compatible with the partitions $(d,e-w_{2,t_2})=(\sum_{i=1}^nd_i,\sum_{i=1}^ne_i)$.
\end{thm}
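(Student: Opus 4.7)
\emph{Proof plan.} The plan is to set up a weighted bijection between slope ordered tropical curves $h$ of type $({\bf w}_1,{\bf w}_2)$ and triples $((d_i,e_i)_i,\,I_\bullet,\,(h_i)_i)$ as in the statement, and then to identify the ratio of multiplicities. First, invoking the two preceding lemmas together with the standard genericity arguments of \cite{gm}, I would fix the displacements $e_{ij}$ so that every tropical curve of type $({\bf w}_1,{\bf w}_2)$ is slope ordered \emph{and} so that, simultaneously, for every $w_{2,t_2}$-admissible decomposition and every compatible proper partition, every pairwise intersection of the outgoing rays of the corresponding smaller tropical curves lies in $\{x^1<e^1_{2,t_2}\}$. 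Since only finitely many slopes $(d_i,e_i)$ appear, this is possible.

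The forward map is the one indicated by the preceding lemma: a slope ordered $h$ determines a spine $G_0=E_{2,t_2},G_1,\dots,G_n=E_{\rm out}$ with straddling edges $F_1,\dots,F_n$ and vertices $V_1,\dots,V_n$, and the weighted directions $(d_k,e_k)$ of the $F_k$ yield a $w_{2,t_2}$-admissible decomposition of $(d,e)$. Cutting each $F_k$ at $V_k$ and removing the spine decomposes $h$ into smaller tropical curves $h_1,\dots,h_n$: the germ of $F_k$ at its other endpoint $W_k$ is extended to infinity in direction $(d_k,e_k)$ and becomes the outgoing edge $E_{k,\rm out}$ of $h_k$. The non-spine unbounded edges of $h$ partition among the $h_i$, producing a proper compatible set partition $I_\bullet$ and the induced weight vectors $({\bf w}(i)_1,{\bf w}(i)_2)$. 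The inverse map is the glueing construction spelled out in the excerpt, which is well-defined by the genericity of the $e_{ij}$; the two procedures are mutually inverse.

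For the multiplicity calculation, since $\mult(h)=\prod_V\mult_V(h)$ and every vertex of $h$ is either equal to some $V_k$ or interior to exactly one $h_i$ (with the weighted direction of $F_k$ at $W_k$ being $(d_k,e_k)$ whether one views $F_k$ as an edge of $h$ or as the germ of $E_{k,\rm out}$), one obtains
\[\mult(h)=\prod_{i=1}^n\mult(h_i)\cdot\prod_{k=1}^n\mult_{V_k}(h).\]
At $V_k$, the adjacent edges $F_k$ and $G_{k-1}$ have weighted directions $(d_k,e_k)$ and $(\sum_{i<k}d_i,\,w_{2,t_2}+\sum_{i<k}e_i)$ respectively, so by the wedge product formula
\[\mult_{V_k}(h)=\left|d_k\Bigl(w_{2,t_2}+\sum_{i<k}e_i\Bigr)-e_k\sum_{i<k}d_i\right|,\]
which is exactly the $k$-th factor on the right-hand side of the theorem. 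Summing the factorization of $\mult(h)$ over all slope ordered $h$ and applying the bijection yields
\[N^{\rm trop}[{\bf w}_1,{\bf w}_2]=\sum_{(d_i,e_i)_i,\,I_\bullet}\Bigl(\prod_{k=1}^n\mult_{V_k}\Bigr)\prod_{i=1}^n N^{\rm trop}[({\bf w}(i)_1,{\bf w}(i)_2)],\]
which is the claimed identity.

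The main obstacle is verifying carefully that the admissibility condition \eqref{slope} is equivalent to the existence, distinctness, and correct ordering of the spine intersection points $s_j$ under the glueing construction, and that ``proper'' set partitions correspond exactly to non-degenerate sub-curves $h_i$ with $|{\bf w}(i)_1|+|{\bf w}(i)_2|\geq 1$. Once this bijection is pinned down, the multiplicity identification is a routine application of the balancing condition and the wedge product formula at each spine vertex.
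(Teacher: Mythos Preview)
Your proposal is correct and follows essentially the same approach as the paper: the bijection between slope ordered curves and glued tuples is precisely the content of the two preceding lemmas and the glueing construction described just before the theorem, and the paper's proof then consists only of the multiplicity computation at the spine vertices $V_k$, which is exactly your determinant formula $\mult_{V_k}(h)=\bigl|\det\begin{pmatrix}\sum_{i<k}d_i & d_k\\ \sum_{i<k}e_i+w_{2,t_2}& e_k\end{pmatrix}\bigr|$. You have simply made explicit the bijection and the multiplicativity of $\mult(h)$ that the paper leaves implicit in the surrounding discussion.
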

{\it Proof.} We just need to determine the multiplicities of the vertices where the original curves are glued. For their multiplicities we get
\[{\rm Mult}_{V_{k}}(h)=\left|\begin{pmatrix}\sum_{i=1}^{k-1}d_i&d_k\\\sum_{i=1}^{k-1}e_i+w_{2,t_2}&e_k\end{pmatrix}\right|\]
for $k=1,\ldots,n$.
\qed\\
\subsection{Recursive construction of localization data}
On the quiver side we have a similar construction:
let $(\mathcal{Q}_1,\beta_1),\ldots, (\mathcal{Q}_n,\beta_n)$ be semistable consisting of disjoint subquivers of $\mathcal{N}$ and a dimension vector $\beta_i\in\mathbb{N}(\mathcal{Q}_i)_0$ of type one (namely $(\beta_i)_q=1$ for all $q\in(\mathcal{Q}_i)_0$), of dimension type $(d_i,e_i)$, i.e. we have
\[\sum_{q\in\mathcal{Q}_i(I)}l(q)=d_i
\text{ and }\sum_{q\in\mathcal{Q}_i(J)}l(q)=e_i,\]
 and $M^{\Theta_l-\rm sst}_{\beta_i}(\mathcal{Q}_i)\neq\emptyset$. Moreover let
the tuple $(d_i,e_i)_{i=1,\ldots,n}$ be a $w$-admissible decomposition of $(d,e)$ where $e:=w+\sum_{i=1}^n e_i$ and $d:=\sum_{i=1}^nd_i$. Consider the tuple $(\mathcal{Q},\beta)$ consisting of the quiver $\mathcal{Q}$ defined by the vertices $\mathcal{Q}_0=\bigcup_{j=1}^n(\mathcal{Q}_j)_0\cup\{q\}$ with $l(q)=w$ and the arrows $\mathcal{Q}_1=\bigcup_{j=1}^n(\mathcal{Q}_j)_1\cup\{\alpha:i\rightarrow q\mid i\in\mathcal{Q}_j(I), j=1,\ldots,n\}$ and the dimension vector $\beta$ obtained by setting $\beta_q=1$.\\

\begin{lem}
Let $(d_i,e_i)_i$ be a $w$-admissible decomposition of $(d,e)$ and let $I\subsetneq\{1,\ldots,n\}$. Then we have
\begin{enumerate}
\item \[\frac{\sum_{i\in I}e_i}{\sum_{i\in I}d_i}\leq\frac{e_n}{d_n}\]
\item $$\frac{w+\sum_{i\in I}e_i}{\sum_{i\in I}d_i}>\frac{w+\sum_{i=1}^ne_i}{\sum_{i=1}^nd_i}$$
\end{enumerate}\end{lem}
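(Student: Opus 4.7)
The plan is to prove parts (1) and (2) separately. Throughout I will tacitly assume that $I$ is nonempty (otherwise both expressions are undefined), and correspondingly that $I^c:=\{1,\ldots,n\}\setminus I$ is nonempty because $I$ is a proper subset. The case $n=1$ is vacuous, so I may assume $n\geq 2$.

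For part (1), I would invoke the standard mediant principle. The slope-ordering condition $e_i/d_i\leq e_{i+1}/d_{i+1}$ from \eqref{slope} together with transitivity gives $e_i d_n\leq e_n d_i$ for every $i=1,\ldots,n$. Summing over $i\in I$ and dividing by the positive quantity $d_n\sum_{i\in I}d_i$ yields the claim. Observe that this argument in fact works for any nonempty $I\subseteq\{1,\ldots,n\}$, not just proper subsets.

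The heart of the matter is part (2). I would first extract the auxiliary strict inequality
\begin{equation*}
\mu_n:=\frac{w+\sum_{i=1}^ne_i}{\sum_{i=1}^nd_i}>\frac{e_n}{d_n}
\end{equation*}
from the admissibility condition \eqref{slope} at $k=n-1$, namely $(w+\sum_{i=1}^{n-1}e_i)/\sum_{i=1}^{n-1}d_i>e_n/d_n$: cross-multiplying, adding $e_nd_n$ to both sides, and dividing by $\sum_{i=1}^nd_i$ produces $\mu_n>e_n/d_n$. Now set $J:=I^c$, which is nonempty, and apply the remark after part (1) to $J$ to obtain $\sum_{i\in J}e_i/\sum_{i\in J}d_i\leq e_n/d_n<\mu_n$.

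To conclude, I would note that $\mu_n$ is precisely the mediant
\begin{equation*}
\mu_n=\frac{\bigl(w+\sum_{i\in I}e_i\bigr)+\sum_{i\in J}e_i}{\sum_{i\in I}d_i+\sum_{i\in J}d_i}
\end{equation*}
of the two positive fractions $(w+\sum_{i\in I}e_i)/\sum_{i\in I}d_i$ and $\sum_{i\in J}e_i/\sum_{i\in J}d_i$. Since a mediant of two distinct positive fractions lies strictly between them, and since the second fraction is strictly smaller than $\mu_n$, the first must be strictly larger, which is exactly the inequality of part (2). The only nontrivial step (as opposed to bookkeeping) is bootstrapping the admissibility condition at stage $n-1$ into the strict inequality $\mu_n>e_n/d_n$ for the full mean; once this is in place, everything else reduces to the elementary mediant property, with no need for induction on $n$ or on $|I|$.
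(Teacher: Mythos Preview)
Your proof is correct and takes a genuinely cleaner route than the paper's.

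For part (1), the paper argues by induction on $n$, splitting into the cases $n\in I$ and $n\notin I$; you instead observe directly that the slope ordering gives $e_id_n\le e_nd_i$ for every $i$, and summing over $I$ finishes it in one line. Your extra remark that (1) holds for \emph{any} nonempty $I\subseteq\{1,\ldots,n\}$ is the key to your streamlined argument for (2).

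For part (2), the paper again proceeds by induction on $n$ with a case split on whether $n\in I$; in the case $n\in I$ it passes to the complement $I'=\{1,\ldots,n\}\setminus I$, reduces via (1) to showing $(w+\sum_{i\in I}e_i)/\sum_{i\in I}d_i>e_n/d_n$, peels off the index $n$, and invokes the inductive hypothesis. You bypass all of this: having established $\mu_n>e_n/d_n$ from the admissibility condition at $k=n-1$ (the same observation the paper uses), you apply (1) to the complement $J=I^c$ and finish with a single mediant comparison. The paper's proof is implicitly doing the same mediant manipulations, just distributed across the induction.

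What your approach buys is brevity and the avoidance of edge cases (e.g.\ the paper's reduction when $I=\{n\}$ leads to an expression with empty index set that needs separate treatment). What the paper's inductive organization might offer is a template closer to how the decomposition is actually built up step by step, but for the lemma as stated your argument is simply more efficient. One tiny wording point: the mediant inequality you invoke only needs positive denominators, not positive numerators, so ``positive fractions'' is slightly stronger than necessary (relevant if some $e_i=0$); the argument is unaffected.
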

{\it Proof.}
Let $I_{\max}\in I$ the largest number in $I$. We proceed by induction on $n$. If $n\in I$, the first inequality is equivalent to
\[\frac{\sum_{i\in I\backslash\{n\}}e_i}{\sum_{i\in I\backslash\{n\}}d_i}\leq\frac{e_n}{d_n}.\]
By induction hypothesis we have
\[\frac{\sum_{i\in I\backslash\{n\}}e_i}{\sum_{i\in I\backslash\{n\}}d_i}\leq\frac{e_{(I\backslash\{n\})_{\max}}}{d_{(I\backslash\{n\})_{\max}}}\leq\frac{e_{n-1}}{d_{n-1}}\leq\frac{e_n}{d_n}.\]
If $n\notin I$, we can apply the induction hypothesis.\\

In order to prove the second inequality, we first assume that $n\notin I$. Then we proceed by induction on $n$. It is easy to check that
\[\frac{w+\sum_{i=1}^{n-1}e_i}{\sum_{i=1}^{n-1}d_i}>\frac{w+\sum_{i=1}^ne_i}{\sum_{i=1}^nd_i}\Leftrightarrow\frac{w+\sum_{i=1}^{n-1}e_i}{\sum_{i=1}^{n-1}d_i}>\frac{e_n}{d_n}.\]
Moreover by the induction hypothesis we have
\[\frac{w+\sum_{i\in I}e_i}{\sum_{i\in I}d_i}>\frac{w+\sum_{i=1}^{n-1}e_i}{\sum_{i=1}^{n-1}d_i}.\]
If $n\in I$, by the first statement we have
$\frac{\sum_{i\in I'}e_i}{\sum_{i\in I'}d_i}\leq\frac{e_n}{d_n}$  where $I':=\{1,\ldots,n\}\backslash I$. Since the second inequality of the statement is equivalent to 
$$\frac{w+\sum_{i\in I}e_i}{\sum_{i\in I}d_i}>\frac{\sum_{i\in I'}e_i}{\sum_{i\in I'}d_i},$$
it suffices to show that
$$\frac{w+\sum_{i\in I}e_i}{\sum_{i\in I}d_i}>\frac{e_n}{d_n}.$$
Since this is equivalent to
$$\frac{w+\sum_{i\in I\backslash\{n\}}e_i}{\sum_{i\in I\backslash\{n\}}d_i}>\frac{e_n}{d_n},$$ we can apply the induction hypothesis using $\frac{w+\sum_{i=1}^{n-1}e_i}{\sum_{i=1}^{n-1}d_i}>\frac{e_n}{d_n}$.\qed

\begin{thm}\label{glue}
The tuple $(\mathcal{Q},\beta)$ is stable. In particular, every stable torus fixed point of this quiver defines a stable torus fixed point of $\mathcal{N}$ of type $(d,e)$. 
\end{thm}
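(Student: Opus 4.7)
The plan is to construct a stable representation of $(\mathcal{Q},\beta)$ directly. For each $j$ I would pick a semistable representation $X^j$ of $(\mathcal{Q}_j,\beta_j)$, which exists by hypothesis and may be chosen with every arrow map non-zero since $\beta_j$ has type one and $\mathcal{Q}_j$ is a tree. Define $X$ on $\mathcal{Q}$ to coincide with $X^j$ on each $\mathcal{Q}_j$ and to assign a generic non-zero scalar to each new arrow $\alpha:i\to q$. A subrepresentation $U\subseteq X$ then decomposes as $U=U_q\sqcup\bigsqcup_jU_j$ with $U_j$ a subrepresentation of $X^j$ of source/sink type $(d'_j,e'_j)$, subject to the constraint that $U_q=1$ as soon as some $U_j$ contains a source. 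If $U_q=0$ then $U$ is supported only on sinks, so $\Theta_l(U)=0$ and $\mu(U)=0<\mu(\beta)$; the substantive case is $U_q=1$.

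In that case the first move is to pass to the quotient side of the short exact sequence $0\to U\to\beta\to\beta/U\to 0$ using the standard equivalence $\mu(U)<\mu(\beta)\Leftrightarrow\mu(\beta/U)>\mu(\beta)$. Because $U_q=1$, the quotient $\beta/U$ has trivial $q$-component, so writing $\delta_j=d_j-d'_j$ and $\epsilon_j=e_j-e'_j$ one obtains
\[
\mu(\beta/U)\;=\;\frac{\sum_j\delta_j}{\sum_j(\delta_j+\epsilon_j)},
\]
which is a weighted average of the piece-slopes $\mu(\beta_j/U_j)=\delta_j/(\delta_j+\epsilon_j)$ taken over $j$ with $U_j\neq\beta_j$. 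Semistability of each $(\mathcal{Q}_j,\beta_j)$ applied to the quotient $\beta_j/U_j$ immediately yields $\mu(\beta_j/U_j)\geq\mu(\beta_j)$.

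The key input from the preceding lemma is the strict inequality $\mu(\beta_k)>\mu(\beta)$ for every $k$: applying part~(2) of the lemma with $I=\{1,\ldots,n\}\setminus\{k\}$ yields $(w+e-e_k)/(d-d_k)>e/d$, which rearranges algebraically to $e_k/d_k<e/d$, equivalently $\mu(\beta_k)=d_k/(d_k+e_k)>d/(d+e)=\mu(\beta)$ (the case $n=1$ is immediate, since $\mu(\beta_1)=d/(d+e-w)>d/(d+e)$). Combining, each piece-quotient satisfies $\mu(\beta_j/U_j)\geq\mu(\beta_j)>\mu(\beta)$, while the properness of $U$ forces the set of $j$ with $U_j\neq\beta_j$ to be non-empty. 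A weighted average of values each strictly greater than $\mu(\beta)$ is itself strictly greater, so $\mu(\beta/U)>\mu(\beta)$ and hence $\mu(U)<\mu(\beta)$. This proves stability of $X$; the second assertion of the theorem then follows by extending $X$ by zero on $\mathcal{N}_0\setminus\mathcal{Q}_0$, since stability and torus-fixedness are both preserved under this extension (no new subrepresentations appear and the $\mathcal{N}$-torus acts trivially on the added zero components), and the resulting dimension vector has type $(d,e)$ by construction.

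The main obstacle I anticipate is to pinpoint the strict inequality $\mu(\beta_k)>\mu(\beta)$ as the precise combinatorial consequence of $w$-admissibility that promotes semistability of the glued tuple to stability, and then to recognise that the semistability of each piece $(\mathcal{Q}_j,\beta_j)$ passes to the quotients $\beta_j/U_j$ in exactly the form needed to assemble $\mu(\beta/U)$ as a convex combination of piece-quotient slopes.
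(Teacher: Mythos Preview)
Your argument is correct, and it is a genuinely different (and in some ways cleaner) route than the one in the paper. The paper stays on the subrepresentation side throughout: it parametrizes the extremal subrepresentations of the all-ones representation by source subsets $I_k\subseteq\mathcal{Q}_k(I)$, invokes semistability of each piece in the form $s_k\geq\frac{e_k}{d_k}t_k$, and then reduces the desired inequality $w+\sum s_k>\frac{e}{d}\sum t_k$ to a purely numerical statement which is handled by applying part~(2) of the preceding lemma to the single index set $I=\{j:k_j>0\}$ determined by the signs of the coefficients $k_j=\frac{d_je-e_jd}{d_jd}$. You instead dualize to the quotient $\beta/U$, where the new sink $q$ disappears, and observe that $\mu(\beta/U)$ is a convex combination of the piece-quotient slopes $\mu(\beta_j/U_j)$; semistability of each piece gives $\mu(\beta_j/U_j)\geq\mu(\beta_j)$, and you apply the lemma only with $I=\{1,\ldots,n\}\setminus\{k\}$ (one $k$ at a time) to get the uniform strict bound $\mu(\beta_k)>\mu(\beta)$. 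The convex-combination step then finishes the argument in one line. What your approach buys is conceptual clarity: the role of the extra sink $q$ of level $w$ is made transparent (it simply forces every piece slope strictly above the global slope), and you avoid the case analysis on the signs of the $k_j$. What the paper's approach buys is that it works uniformly with the $\sigma$-criterion already set up in Section~\ref{trees}, and it exercises the lemma in its full generality rather than only for singleton complements.

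One small remark: your parenthetical that $\mathcal{Q}_j$ ``is a tree'' is not part of the hypotheses and need not hold. Fortunately you do not use it; any semistable $X^j$ will do, and in fact the all-ones representation is automatically semistable once some semistable representation exists (its subrepresentations form a subset of those of any other type-one representation). Similarly, ``generic'' for the new scalars is harmless but unnecessary --- any nonzero values suffice.
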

{\it Proof.} Since we have $\beta_q=1$ for all $q\in\mathcal{Q}_0$, we consider the representation $X$ defined by $X_{\alpha}=1$ for all $\alpha\in\mathcal{Q}_1$. Let $I_k\subseteq \mathcal{Q}_k(I)$ be arbitrary subsets with $t_k:=\sum_{i\in I_k}l(i)$ and let $s_k:=\sigma_{I_k}$ such that $I_k\neq \mathcal{Q}(I)_k$ for at least one $k$ and $I_k\neq\emptyset$ for at least one $k$. We have to show
\[w+\sum_{i=1}^ns_i>\frac{w+\sum_{i=1}^ne_i}{\sum_{i=1}^nd_i}(\sum_{i=1}^nt_i).\]
Since the tuples we started with are semistable, we have
\[w+\sum_{i=1}^ns_i\geq w+\sum_{i=1}^n\frac{e_i}{d_i}t_i.\]
Thus it suffices to show
\[w>\sum_{j=1}^nt_j\frac{d_j(w+\sum_{i=1}^ne_i)-e_j(\sum_{i=1}^nd_i)}{d_j(\sum_{i=1}^nd_i)}.\]
Therefore, if
\[k_j:=\frac{d_j(w+\sum_{i=1}^ne_i)-e_j(\sum_{i=1}^nd_i)}{d_j(\sum_{i=1}^nd_i)}> 0,\] we can assume that $t_j=d_j$, and otherwise we can assume that $t_j=0$.
Let $I\subseteq\{1,\ldots,n\}$ such that $j\in I\Leftrightarrow k_j>0$. Then we have to show that
$$\frac{w+\sum_{i\in I}e_i}{\sum_{i\in I}d_i}>\frac{w+\sum_{i=1}^ne_i}{\sum_{i=1}^nd_i}$$
what follows by the preceding lemma. Note that if $I=\{1,\ldots,n\}$, i.e. $k_j>0$ for all $1\leq j\leq n$, from $w=\sum_{j=1}^nd_jk_j$ it follows that
$w>\sum_{j=1}^nt_jk_j$ if $t_j<d_j$ for at least one $j$.\\


\qed

\begin{rem}
\end{rem}
\begin{itemize}
\item It would be interesting to know if every localization data can be obtained by this construction. We conjecture that this is true, but it seems more difficult to prove this than the tropical analogue.
\item The main goal we have in mind is to construct a direct correspondence between tropical curves and localization data. Given a tropical curve of slope $(d,e)$, say with multiplicity $m$, there should be $m$ localization data of dimension type $(d,e)$  corresponding to this tropical curve. We see the two recursive constructions which we have described as a step in this direction: they give us a way to glue smaller objects in order to build more complicated ones. Moreover on both sides we have the same numerical conditions. So starting with smaller objects for which a correspondence is known this should give a correspondence between the glued objects.
\item In some cases such a correspondence is obtained immediately: assume that we have decomposed $(d,e)=(d_s,e_s)+(d',e')$ as in \cite[Section 4.3]{wei}. Then we have $|d_se-de_s|=1$. In particular, the preceding methods give a one-to-one correspondence in this case. Indeed, we can understand the vertex corresponding to the last leg as the glueing vertex.
\item Unfortunately, the construction does not always give a canonical correspondence. Consider the data
\[
\begin{xy}
\xymatrix@R0.5pt@C20pt{
&1\\1\ar[ru]\ar[rd]&\\&1&1\ar[l]\ar[r]&1\\1\ar[ru]\ar[rd]&\\&1 }
\end{xy}
\]
with a {\it fixed} colouring. Glueing an additional sink, we get the data 
\[
\begin{xy}
\xymatrix@R0.5pt@C20pt{
&1&&\\\\&1\\1\ar[ruuu]\ar[ru]\ar[rd]&\\&1&1\ar[luuuu]\ar[l]\ar[r]&1\\1\ar@/^1.9pc/[ruuuuu]\ar[ru]\ar[rd]&\\&1 }
\end{xy}
\]
which has six subdata defining localization data, obtained by deleting two arrows in an appropriate way. 

But on the tropical curve side we only get three new curves in this way because we glue a curve of slope $(3,4)$ and one of slope $(0,1)$. This already gives the impression that we should not consider all possible localization data of the constructed stable tuples (with cycles as above). 
\item Notice that Corollary \ref{troprec} also gives a recursive formula for the Euler characteristic of moduli spaces. Indeed, even for $w$-admissible decompositions of $(d,e)$ involving non-primitive vectors $(d_i,e_i)$ one ends up with primitive vectors after finitely many steps.
\end{itemize}

\subsection{Examples and discussion}\label{examples}
In this section we discuss two examples in which we obtain a direct correspondence between tropical curves and localization data by using the methods described above.
\subsubsection{The case $(2,2n+1)$}
We consider the example of $2n+1$ points in the projective plane, i.e. $({\bf P}_1,{\bf P}_2)=(2,1^{2n+1})$. There exist two refined partitions which are the partition itself and $(k^1,k^2)=(1+1,1^{2n+1})$. In the first case, the only tree to consider is 
\[
\begin{xy}
\xymatrix@R2pt@C20pt{
&&j\\\\\\i_1\ar[rruuu]&\dots&\dots&\dots&i_{2n+1}\ar[lluuu]}
\end{xy}
\]
with $l(j)=2$ and $l(i_k)=1$. In the second case, the only tree to consider is
\[
\begin{xy}
\xymatrix@R2pt@C20pt{
&j_1&&j_2&\\\\\\i_1\ar[ruuu]&\dots&i_{n+1}\ar[luuu]\ar[ruuu]&\dots&i_{2n+1}\ar[luuu]}
\end{xy}
\]
Now it is easy to check that we have $2^{2n+1}$ different embeddings (or colourings) in the first case and 
\[\binom{2n+1}{n}\binom{n+1}{n}\] different embeddings in the second case. Thus by the MPS formula for the Euler characteristic we get:
\begin{eqnarray}\label{ec}\chi(M^{\Theta-\mathrm{st}}(2,1^{2n+1}))=\frac{1}{2}\binom{2n+1}{n}\binom{n+1}{n}-\frac{1}{4}2^{2n+1}.\end{eqnarray}
Following the construction of the last section, we have to decompose the vector $(2,2n+1)$ into a $1$-admissible tuple $(d_i,e_i)$. The only two possibilities are
\[(d_1,e_1)=(d_2,e_2)=(1,n)\text{ and }(d_1,e_1)=(2,2n),\]
and, moreover, the only $1$-admissible decomposition of $(2,2n)$ is $(d_1,e_1)=(2,2n-1)$.
In order to get a direct correspondence, we can proceed as follows: for the first $1$-admissible decomposition, the construction is straightforward. We just pick the two corresponding localization data of type $(1,n)$ and glue them in $i_{2n+1}$. 
In the second case assume that we have already constructed the curves corresponding to $(2,2n-1)$. The only way to obtain a curve corresponding to $(2,2n+1)$ from such a curve is to glue twice a curve of slope $(0,1)$ to it. If $m$ is the multiplicity of the tropical curve of slope $(2,2n-1)$, the multiplicity of the resulting curve is $4m$. 
On the quiver side this means that we have to construct four localization data of type $(2,2n+1)$ from every localization data of type $(2,2n-1)$. Consider the uncoloured localization data
\[
\begin{xy}
\xymatrix@R2pt@C20pt{
&j_1&&j_2&\\\\\\i_1\ar[ruuu]&\dots&i_{n}\ar[luuu]\ar[ruuu]&\dots&i_{2n-1}\ar[luuu]}
\end{xy}
\]

Considering the construction of the last section we can construct a stable data of type $(2,2n)$ starting with this one. This leads to two semistable tuples by deleting one of the two new arrows. In short, we just glue the vertex $i_{2n}$ to one of the sinks. By the last section we now have to consider the following tuple
\[
\begin{xy}
\xymatrix@R2pt@C20pt{&&i_{2n+1}\ar[rddd]\ar[lddd]\\\\\\
&j_1&&j_2&\\\\\\i_1\ar[ruuu]&\dots&i_{n}\ar[luuu]\ar[ruuu]&\dots&i_{2n-1}\ar[luuu]&i_{2n}\ar[lluuu]}
\end{xy}
\]
But now it is easy to check that we have two possibilities to obtain a localization data from this.
On the curve side we consider a line arrangement of the following shape:\\
\setlength{\unitlength}{1cm}
\begin{picture}(20,3.5)(1,0)
\linethickness{.2mm}
\multiput(1,0.5)(0,2){2}
{\line(1,0){13}}
\multiput(3.5,0)(1,0){3}
{\line(0,1){3}}
\put(7.5,1.5){\dots\dots}
\multiput(9.5,0)(1,0){3}
{\line(0,1){3}}
\end{picture}\vspace{0.3cm}
with $2n+1$ vertical legs. In order to determine the corresponding tropical curves, we first consider the tropical curve of weight one for the partition $(1,1^{n})$ (here for $n=3$), i.e.:\\
\begin{picture}(20,4)(-6,0)
\linethickness{.2mm}
\put(-1,0.5){\line(1,0){0.5}}
\put(-0.5,0){\line(0,1){0.5}}
\put(0,0){\line(0,1){1}}
\put(0.5,0){\line(0,1){2}}
\put(-0.5,0.5){\line(1,1){0.5}}
\put(0,1){\line(1,2){0.5}}
\put(0.5,2){\vector(1,3){0.5}}
\end{picture}\vspace{0.3cm}
For general $n$ we have $\binom{2n}{n}$ possibilities to embed the curves corresponding to the partition $(1,1^{n})$ into the upper row of the line arrangement above and another curve of the same slope into the lower row. Then we can glue these two curves as described in the last section. 
For the other refined partition which is the partition itself we obviously get one curve of weight $2^{2n+1}$. To sum up, we get
$N^{\rm trop}(2,1^{2n+1})=\frac{1}{2}(\binom{2n}{n}+4\binom{n}{n-1}\binom{2n-1}{n-1})-\frac{1}{4}2^{2n+1}$ which is easily seen to be the same as the expression (\ref{ec}).
For $n=1$ we get the following localization data and the following curves of multiplicity four, one and one respectively:\\
\begin{picture}(20,5)(-2.5,0)
\linethickness{.2mm}
\put(-2,1){\line(1,0){1}} 
\put(-2.5,1){$i_1$}
\put(-2,3){\line(1,0){2.5}}
\put(-2.5,3){$i_2$}
\put(-1,0){\line(0,1){1}}
\put(-1,-0.5){$j_1$}
\put(0,0){\line(0,1){2}}
\put(0,-0.5){$j_2$}
\put(1,0){\line(0,1){3.5}}
\put(1,-0.5){$j_3$}
\put(-1,1){\line(1,1){1}}
\put(0.5,3){\line(1,1){0.5}}
\put(0,2){\line(1,2){0.5}}
\put(1,3.5){\vector(2,3){1}}
\put(3.5,1){$i_{k_1}$}
\put(4,1){\vector(2,1){2}}
\put(4,1){\vector(2,-1){2}}
\put(3.5,3){$i_{k_2}$}
\put(4,3){\vector(2,1){2}}
\put(4,3){\vector(2,-1){2}}
\put(6.25,0){$j_{l_3}$}
\put(6.25,2){$j_{l_2}$}
\put(6.25,4){$j_{l_1}$}
\end{picture}\\\\\\
with $l_1,l_2\in\{1,2\}$ and $k_1,k_2\in\{1,2\}$ and $l_3=3$ which are four localization data. For the curves\\
\begin{picture}(10,5)(-2.5,0)
\linethickness{.2mm}
\put(-1,0.5){\line(1,0){0.5}}
\put(-1.5,0.5){$i_1$}
\put(-1,1.5){\line(1,0){1}}
\put(-1.5,1.5){$i_2$}
\put(-0.5,0){\line(0,1){0.5}}
\put(-0.5,-0.5){$j_1$}
\put(0,0){\line(0,1){1.5}}
\put(0,-0.5){$j_2$}
\put(0.5,0){\line(0,1){1.5}}
\put(0.5,-0.5){$j_3$}
\put(-0.5,0.5){\line(1,1){1}}
\put(0,1.5){\line(1,1){1}}
\put(0.5,1.5){\line(1,2){0.5}}
\put(1,2.5){\vector(2,3){0.5}}

\put(2.5,0.5){\line(1,0){1}}
\put(2.0,0.5){$i_1$}
\put(2.5,1.5){\line(1,0){0.5}}
\put(2.0,1.5){$i_2$}
\put(3,0){\line(0,1){1.5}}
\put(3,-0.5){$j_1$}
\put(3.5,0){\line(0,1){0.5}}
\put(3.5,-0.5){$j_2$}
\put(4,0){\line(0,1){1}}
\put(4,-0.5){$j_3$}
\put(3.5,0.5){\line(1,1){0.5}}
\put(3,1.5){\line(1,1){2.5}}
\put(4,1){\line(1,2){1.5}}
\put(5.5,4){\vector(2,3){0.5}}
\end{picture}\\\\\\
we get the same quiver coloured by $k_1=1,\,k_2=2,\,l_1=2,\,l_2=3,\,l_3=1$ and
$k_1=1,\,k_2=2,\,l_1=1,\,l_2=3,\,l_3=2$ respectively.
%
%
\subsubsection{The case $(d,d+1)$}
We consider the dimension vector $(d,d+1)$ concentrating on the trivial refinement $(1^d,1^{d+1})$. The $1$-admissible decompositions of $(d,d+1)$ are given by $(d,d+1)=(0,1)+\sum_{i=1}^n(d_i,d_i)$ for some $n\leq d$. Moreover every slope-ordered tropical curve of slope $(d,d)$ is obtained by a tropical curve of slope $(d-1,d)$ glued with one of slope $(1,0)$. 
So for fixed tropical curves/localization data of slope $(d_i-1,d_i)$  we have to glue them in a certain way in order to get new tropical curves/localization data. If the multiplicities of these tropical curves are $m_1,\ldots,m_n$ the multiplicity of the new curve is easily determined to be
\[\prod_{i=1}^nm_id_i^2.\]
On the quiver side this means that we have to construct $\Pi_{i=1}^nd_i^2$ new localization data from those of type $(d_i-1,d_i)_i$.
By the results of \cite[Section 6.2]{wei} we know that every source of a localization data of dimension type $(d,d+1)$ of type one has exactly two neighbours and, therefore, is obtained by glueing the following data
\[
\begin{xy}
\xymatrix@R0.5pt@C20pt{
&1\\1\ar[ru]\ar[rd]&\\&1}
\end{xy}
\]
and colouring the vertices. Thus fix an $n$-tuple of localization data $(\mathcal{Q}_i,\beta_i)$ of type $(d_i-1,d_i)$ with a fixed embedding into $\mathcal{N}$. Let $R_i\subset\mathcal{Q}_i(I)\times\mathcal{Q}_i(J)$ be the arrows of $\mathcal{Q}_i$. By \cite[Section 6.2]{wei} it is known that every connected subdata of dimension type $(d_i,d_i+1)$ is a localization data of this dimension type. So we may restrict to the case $n=1$. We are interested in certain semistable subtuples of type $(d_i,d_i)$ such that every source has at most two neighbours. Fixed such a subtuple there is exactly one possibility to glue an additional sink in order to get a localization data of type $(d_i,d_i+1)$ which is a subdata of the one constructed in Theorem \ref{glue}. We proceed as follows:
Let $\mathcal{Q}'(I):=\mathcal{Q}(I)\cup\{i_d\}$ and $\mathcal{Q}'(J)=\mathcal{Q}(J)$. Now there are several possibilities for the arrows. Initially, we consider $R':=R\cup\{(i_d,j)\}$ for some $j\in\mathcal{Q}'(J)$. Note that this gives us $d$ choices. Secondly, we consider the arrows given by
\[R':=(R\cup\{(i_d,j_1),(i_d,j_2)\})\backslash\{(i,j_k)\}\]
for some $j_1,j_2\in \mathcal{Q}'(J)$ with $j_1\neq j_2$, $i\in N_{j_k}$ (with $i\neq i_d$) for $k\in\{1,2\}$. This gives $2\binom{d}{2}$ choices. Note that
$d+2\binom{d}{2}=d^2$.

\begin{thm}
By this construction we get $\prod_{i=1}^nd_i^2$ localization data of type $(d_i,d_i+1)$ starting with localization data of type $(d_i-1,d_i)$ for $i=1,\ldots,n$. Moreover every localization data is obtained in this way.
\end{thm}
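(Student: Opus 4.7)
The plan is to establish the count and the surjectivity (``every localization data is obtained in this way'') separately, reducing to the case $n=1$ as suggested in the paragraph preceding the statement.

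For the count, fix $n=1$ and a type-one $(d-1,d)$ localization data $(\mathcal{Q},\beta)$ with its embedding into $\mathcal{N}$. The first construction, attaching a new source $i_d$ with a single edge $(i_d,j)$, is parametrized by the $d$ sinks of $\mathcal{Q}$ and yields $d$ pairwise distinct outcomes. The second, attaching $i_d$ with two edges $(i_d,j_1),(i_d,j_2)$ and simultaneously deleting one pre-existing edge $(i,j_k)$, is parametrized by an unordered pair $\{j_1,j_2\}$ of sinks and a choice of $k\in\{1,2\}$, yielding $2\binom{d}{2}$ further outcomes; the two-neighbour property of sources in $(\mathcal{Q},\beta)$ (from \cite[Section 6.2]{wei}) pins down $i$ uniquely as the remaining neighbour of $j_k$ once $i_d$ is placed, so the prescription is well defined. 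Summing gives $d+2\binom{d}{2}=d^2$, and the global count $\prod_{i=1}^n d_i^2$ follows by applying this separately to each piece of the $1$-admissible decomposition and invoking Theorem~\ref{glue} to glue the pieces along the distinguished $(0,1)$-sink.

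Next I verify that each outcome is a genuine localization data of type $(d,d+1)$. By Theorem~\ref{glue} it suffices to check that the intermediate tuple of dimension type $(d,d)$ is semistable, since the extra sink is then attached uniquely. In the first construction, semistability is inherited from $(\mathcal{Q},\beta)$ by a direct case analysis on subsets $I'\subseteq\mathcal{Q}'(I)$ depending on whether $i_d\in I'$. In the second, the only delicate case is when $I'\ni i_d$ and $I'$ also contains the source $i$ whose edge was deleted; here strictness of the slope inequality for the pre-existing data provides the slack required to absorb the shuffled edge.

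For the surjectivity, let $(\tilde{\mathcal{Q}},\tilde\beta)$ be an arbitrary type-one localization data of type $(d,d+1)$. By \cite[Section 6.2]{wei} every source of $\tilde{\mathcal{Q}}$ has exactly two neighbours. I would identify the sink $j_0$ corresponding to the $(0,1)$-summand of the $1$-admissible decomposition, detach it to produce a $(d,d)$ subtuple in which exactly one source $i_d$ has lost a neighbour, and then remove $i_d$ (recording the label of its deleted edge in the two-neighbour case). The resulting $(d-1,d)$ subquiver inherits the semistability inequalities from $(\tilde{\mathcal{Q}},\tilde\beta)$, so it is itself a valid localization data; the two cases ``$i_d$ retained one neighbour'' and ``$i_d$ retained two neighbours'' correspond respectively to the first and second cases of the forward construction, and the reverse procedure is inverse to it on the nose.

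The main obstacle will be the canonical identification of $j_0$ and $i_d$ in the surjectivity step and the verification that the $(d-1,d)$ subquiver left after their removal is a genuine localization data (rather than merely a connected subtree of the right dimension type). Both points rely on reading the $1$-admissible decomposition off the tree structure combined with the two-neighbour property from \cite[Section 6.2]{wei}; once these are in place the bijection is forced by matching the number of removed edges at $i_d$ with the case distinction in the construction.
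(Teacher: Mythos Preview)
Your count and your verification that the construction produces stable tuples are fine and actually more explicit than the paper, which leaves both to the discussion preceding the statement. The gap is in the surjectivity step, precisely at the point you flag as the main obstacle.

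You propose to identify the added source as ``the unique source that lost a neighbour when $j_0$ is detached''. This is correct for the first construction but wrong for the second. In the second construction the new source $i_d$ is attached to two \emph{old} sinks $j_1,j_2$, while the extra sink $j_{d+1}$ is attached to the old source $i$ whose edge $(i,j_k)$ was deleted (this is what restores $i$ to two neighbours). Hence after removing $j_{d+1}$ the source that drops to one neighbour is $i$, not $i_d$. Removing $i$ leaves a $(d-1,d)$ tree that still contains $i_d$ and is \emph{not} the data you started from, so ``the reverse procedure is inverse to it on the nose'' fails. Your subsequent case split ``$i_d$ retained one/two neighbours'' then collapses, since under your definition the identified source always retains exactly one.

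The paper resolves this by using the embedding into $\mathcal{N}$ rather than a structural invariant: it deletes the sink $j_{d+1}$ and the source $q_{\max}$ of \emph{maximal index}. Because the construction always assigns the fresh source the top index, $q_{\max}=i_d$ in both cases. One then reads off the case from $|N_{q_{\max}}|$ after removing $j_{d+1}$: if it is $1$ we are in the first construction and deletion of $q_{\max}$ already gives back the $(d-1,d)$ data; if it is $2$ we are in the second, the unique remaining source of degree $1$ is the old $i$, and after deleting $q_{\max}$ one reinserts the edge $(i,j)$ for the unique $j\in N_{q_{\max}}$ that reconnects the two components (equivalently, the unique choice yielding a localization data). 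So the missing idea is simply to exploit the vertex labelling coming from the embedding, not an intrinsic property of the tree.
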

{\it Proof.} Consider a localization data of type $(d,d+1)$. By deleting the vertex $j_{d+1}$ (including the corresponding arrows) we get $n$ semistable subdata of type $(d_i,d_i)$ for $i=1,\ldots, n$. Let $q(i)_{\max}\in\mathcal{Q}_i(I)$ be the source with the maximal index. If $|N_{q(i)_{\max}}|=1$ we also delete this vertex and get a localization data of type $(d_i-1,d_i)$. If $|N_{q(i)_{\max}}|=2$ there exists exactly one source $q(i)\in\mathcal{Q}_i(I)$ such that $|N_{q(i)}|=1$. After deleting $q(i)_{\max}$, there exists one possibility to obtain a localization data by adding an extra arrow $(q(i),j)$ where $j\in N_{q(i)_{\max}}$. This already shows that every localization data of type $(d,d+1)$ is obtained by this construction. \qed

This is enough to describe the required correspondence between localization data and curves in this case.

\vspace{.5cm}
Bergische Universit\"at Wuppertal\\
reineke@math.uni-wuppertal.de
\vspace{.5cm}\\
Trinity College, Cambridge and Universit\`a di Pavia\\
js807@cam.ac.uk / jacopo.stoppa@unipv.it
\vspace{.5cm}\\
Bergische Universit\"at Wuppertal\\
weist@math.uni-wuppertal.de
\end{document}